\tikzstyle{vertex} = [fill,shape=circle,node distance=80pt]
\tikzstyle{edge} = [opacity=0.4,fill opacity=0.0,line cap=round, line join=round, line width=40pt]
\tikzstyle{elabel} =  [fill,shape=circle,node distance=30pt]
\theoremstyle{plain}
\newtheorem{theorem}{Theorem}[section]		
\newtheorem{lemma}[theorem]{Lemma}
\newtheorem{claim}[theorem]{Claim}
\newtheorem{conjecture}[theorem]{Conjecture}
\newtheorem{observation}[theorem]{Observation}
\theoremstyle{remark}
\let\originalleft\left
\let\originalright\right
\renewcommand{\left}{\mathopen{}\mathclose\bgroup\originalleft}
\renewcommand{\right}{\aftergroup\egroup\originalright}
\def\imod#1{\allowbreak\mkern10mu({\operator@font mod}\,\,#1)}
\title{Immersions of directed graphs in tournaments}
\author{Ant\'onio Gir\~ao}
\author{Robert Hancock}
\thanks{
AG: Mathematical Institute, University of Oxford, Oxford OX2 6GG, UK. E-mail: {\tt girao@maths.ox.ac.uk}. {Research supported by EPSRC grant EP/V007327/1}}
\thanks{
RH: Mathematical Institute, University of Oxford, Oxford OX2 6GG, UK. E-mail: {\tt robert.hancock@maths.ox.ac.uk}. Research supported by ERC Advanced Grant 883810 and by a Humboldt Research Fellowship at Heidelberg University.}
\begin{document}
\maketitle
\begin{abstract}
Recently, Dragani\'c, Munh\'a Correia, Sudakov and Yuster~\cite{Draganicetal} showed that every tournament on $(2+o(1))k^2$ vertices contains a $1$-subdivision of a transitive tournament on $k$ vertices, which is tight up to a constant factor. 
We prove a counterpart of their result for immersions. 
Let $f(k)$ be the smallest integer such that any tournament on at least $f(k)$ vertices must contain a $1$-immersion of a transitive tournament on $k$ vertices. We show that $f(k)=O(k)$, which is clearly tight up to a multiplicative factor. 
If one insists in finding an immersion of a complete directed graph on $k$ vertices then an extra condition on the tournament is necessary. 
Indeed, we show that every tournament with minimum out-degree at least $Ck$ must contain a $2$-immersion of a complete digraph on $k$ vertices. This is again tight up to the value of $C$ and tight on the order of the paths in the immersion. 
\end{abstract}

\section{Introduction}
Given a graph $G$, an \emph{immersion} of $G$ is any graph which can be obtained by replacing the edges of $G$ by pairwise edge-disjoint paths of any length. This is a relaxation of the notion of $\textit{subdivision}$ since one does not require the paths to be vertex-disjoint. Moreover, a \emph{$1$-immersion} is an immersion where all paths have length at most $2$. There is a lot of literature on extremal problems regarding subdivisions and immersions of graphs and digraphs. 
Indeed, a classical result of Bollob\'as and Thomason~\cite{BollobasThomason} independently proved by K\'omlos and Szemer\'edi~\cite{KomlosSzemeredi} states that any graph with average degree at least $Ct^2$ contains a subdivision of a complete graph on $t$ vertices. More recently DeVos, Dvo\v{r}\'ak, Fox, McDonald, Mohar and Scheide~\cite{Foxetal} showed that average degree at least $200t$ is enough to guarantee an immersion of a clique on $t$ vertices. It is not unlikely though that a lower bound of $(1+o(1))t$ for the minimum degree of a graph is enough to guarantee an immersion of $K_t$. Liu, Wang and Yang~\cite{hongetal} confirmed this provided the graph does not contain a fixed complete bipartite graph.

It is therefore natural to ask what happens in the directed setting. A complete digraph on $t$ vertices is a digraph where every pair of vertices are joined by two edges, one in each direction.
The first author together with Popielarz and Snyder~\cite{Popielarz} showed that any tournament with minimum out-degree at least $(2+o(1))t^2$ must contain a subdivision of a complete digraph on $t$ vertices and it follows from a construction of Thomassen~\cite{Thomasseneven} that an analogous result for arbitrary digraphs cannot hold, even to find an immersion: for every $k$, there is a digraph with minimum out and in-degree $k$ which does not contain an immersion of a complete digraph on at least $3$ vertices. One could then ask a slightly weaker question. Is there a function $f: \mathbb{N}\rightarrow \mathbb{N}$
 such that every digraph with minimum out-degree at least $f(k)$ contains a subdivision of a transitive tournament on $k$ vertices? A positive answer was conjectured to be true by Mader~\cite{Mader3} but unfortunately the answer remains elusive. We note however that Lochet~\cite{Lochet} showed in a beautiful paper that Mader's Conjecture does hold for the immersions of transitive tournaments. 
 
 Erd\H{o}s and Burr~\cite{burr1975magnitude} conjectured that subdivisions of graphs
in which each subdivision path is of length at least $2$, have a Ramsey number which is linear in the number
of vertices. Alon~\cite{Alon} resolved this in $1994$, showing that every graph on $n$ vertices in which no two vertices
of degree at least $3$ are adjacent has Ramsey number at most $Cn$. Later, Alon, Krivelevich and Sudakov~\cite{AlonKrivSudak} proved a stronger density-type result for complete graphs, showing that every $n$-vertex graph with at least $\varepsilon n^2$ edges must contain the $1$-subdivision of a complete graph on $c(\varepsilon) \sqrt{n}$ vertices which proves an old conjecture of
Erd\H{o}s. Very recently, Dragani\'c, Munh\'a Correia, Sudakov and Yuster~\cite{Draganicetal} proved an analogous Ramsey-type result in the tournament setting. Namely, that every tournament on at least $(2+o(1))k^2$ vertices must contain a $1$-subdivision of a transitive tournament on $k$ vertices, which is tight up to a factor of $4$. In our main result, we prove a similar statement for immersions. 

We recall a couple of definitions first. 
As is the case with subdivisions, for an immersion $I$ of $G$, we call the original vertices of $G$ the \emph{branch} vertices of $I$.
A \emph{strong} immersion is an immersion where none of the internal vertices in paths between branch vertices are themselves branch vertices.

\begin{theorem}\label{thm:immersion1}
There exists a constant $C>0$ such that all tournaments $T$ on $Ck$ vertices contain a strong $1$-immersion of a transitive tournament on $k$ vertices. 
\end{theorem}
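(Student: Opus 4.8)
The plan is to designate $k$ \emph{branch vertices} $v_1,\dots ,v_k$, to regard $v_1<\dots <v_k$ as the target transitive order, and to realise the immersion as follows: each \emph{forward} pair (an ordered pair $i<j$ with $v_i\to v_j$ in $T$) is routed along the single edge $v_iv_j$, and each \emph{backward} pair ($i<j$ with $v_j\to v_i$ in $T$) is routed along a path $v_i\to w\to v_j$ through some \emph{hub} $w\notin\{v_1,\dots ,v_k\}$, so that the immersion is strong. The forward paths are pairwise edge-disjoint and use only edges among branch vertices, hence are disjoint from all backward paths; and two backward paths $v_i\to w\to v_j$ and $v_{i'}\to w'\to v_{j'}$ share an edge if and only if $w=w'$ and ($i=i'$ or $j=j'$). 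So, writing $H$ for the bipartite graph on $L=\{1,\dots ,k\}$ and $R=\{1,\dots ,k\}$ with an edge $ij$ for each backward pair $(i,j)$, producing the immersion is exactly the same as properly edge-colouring $H$ in such a way that each edge $ij$ receives a colour $w(i,j)\in N^+(v_i)\cap N^-(v_j)\setminus\{v_1,\dots ,v_k\}$; each colour class is then automatically a matching, and the corresponding length-$2$ paths through that hub are pairwise edge-disjoint.

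For the colouring step I would colour the edges of $H$ greedily: when colouring $ij$, fewer than $2\Delta(H)<2k$ colours appear on edges meeting $ij$, so a free admissible colour exists provided $|N^+(v_i)\cap N^-(v_j)|\ge 3k$ (Galvin's theorem on bipartite list edge-colouring would even allow $2k$ here, but the crude bound is enough). Thus everything reduces to: \emph{in a tournament on $Ck$ vertices, find $k$ vertices and a linear order on them such that every pair $\{x,y\}$ placed with $x$ before $y$ while $T$ has the edge $y\to x$ satisfies $|N^+(x)\cap N^-(y)|\ge 3k$.} Call such a pair \emph{dangerous} if $|N^+(x)\cap N^-(y)|<3k$; we must never place the head of a dangerous edge before its tail. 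Letting $D$ be the oriented graph on $V(T)$ whose edges are the (oriented) dangerous edges, what is needed is an induced \emph{acyclic} sub-digraph of $D$ on $k$ vertices: then ordering those vertices by a topological order of $D$ restricted to them, extended arbitrarily, makes the routing go through.

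Finding this large acyclic set is where I expect the real work, and the constant $C$, to sit. I would first dispose of the easy case: if $T$ contains a transitive subtournament on $k$ vertices we are already done, since that is a strong $1$-immersion with all paths of length $1$; so assume $T$ is far from transitive. Next I would exploit the following structural fact: the pairs $\{x,y\}$ with $y\to x$ and $N^+(x)\cap N^-(y)=\emptyset$ form a \emph{transitive} sub-digraph $D_0$ of $D$ — indeed, if $N^+(x)\cap N^-(y)=\emptyset=N^+(y)\cap N^-(z)$, then an edge $x\to z$ would force $z\in N^+(x)\cap N^-(y)$, so $z\to x$, and any $w\in N^+(x)\cap N^-(z)$ would fall into $N^+(x)\cap N^-(y)$ or into $N^+(y)\cap N^-(z)$ according to the orientation of the edge between $w$ and $y$ — so $D_0$ imposes no obstruction on its own. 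It then remains to handle the dangerous edges $y\to x$ with $1\le |N^+(x)\cap N^-(y)|<3k$. Here the plan is a counting argument: bound, for all but a few vertices $v$, the number of $z$ with $z\to v$ and $|N^+(v)\cap N^-(z)|<3k$, using that $T$ has no transitive subtournament on $k$ vertices and hence is locally rich, and then build the branch set one vertex at a time, at each step discarding the vertices that would create a dangerous edge into an already-chosen branch vertex; if $C$ is a sufficiently large absolute constant the remaining pool never empties. (A probabilistic alternative is to pick the branch vertices at random and then delete the few lying on a short dangerous directed cycle.) The main obstacle is precisely this last point: showing that ``dangerousness'' cannot be spread cyclically throughout the whole tournament without forcing a large transitive subtournament.
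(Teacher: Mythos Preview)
Your reduction to a list-edge-colouring problem is clean and correct: realising the strong $1$-immersion amounts to choosing, for each backward pair $(i,j)$, a hub in $N^+(v_i)\cap N^-(v_j)\setminus\{v_1,\dots,v_k\}$ so that no two chosen hubs at a common endpoint coincide, and a greedy (or Galvin) argument indeed works once every backward pair has $\ge 3k$ admissible hubs. The difficulty, as you yourself flag, is entirely in the step you leave unproven: finding $k$ vertices on which the ``dangerous'' digraph $D$ is acyclic. Your transitivity observation for $D_0$ is correct but irrelevant once the threshold is $3k$ rather than $0$, and neither of your two sketches for the remaining edges goes through. The greedy deletion plan requires bounding, for most $v$, the in-degree of $v$ in $D$; but this can be linear in $|T|$ even when $d^+(v)$ is large (take $N^-(v)\to N^+(v)$ almost completely), so the pool can empty long before you have $k$ vertices. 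The probabilistic alternative --- sample and delete short directed cycles in $D$ --- faces the same issue, since $D$ can have $\Theta((Ck)^2)$ edges and hence far too many short cycles to kill by deleting $o(Ck)$ vertices. Your dichotomy with ``no transitive subtournament on $k$ vertices'' does not obviously buy any control on $D$. So as written the proposal has a genuine gap at its heart.

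The paper avoids this obstacle by \emph{not} insisting that every backward pair have $\Omega(k)$ common neighbours. Instead it fixes once and for all the order by decreasing out-degree in $T$, samples each vertex into $S$ independently with a small constant probability $p$, and shows that with positive probability the backward edges of $S$ satisfy a Hall-type condition (Lemma~\ref{lem:pairs}): at each branch vertex $x$, the $j$-th ``worst'' backward edge incident to $x$ has at least $4j$ admissible hubs, which is enough to pair greedily. The point is that the out-degree ordering guarantees, deterministically, that if $z$ is the $i$-th smallest-out-degree in-neighbour of $x$ among vertices after $x$, then $|N^+(x)\cap N^-(z)|\ge \lfloor i/2\rfloor$; random sampling then ensures that not too many of the first $i$ such neighbours, nor too many of the potential hubs, land in $S$. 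So where you try to engineer a uniform $3k$ lower bound and are blocked by the acyclicity problem, the paper accepts arbitrarily small common neighbourhoods for a few backward edges at each vertex and compensates with a graded matching lemma; this is the missing idea.
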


We should point out that the techniques used in \cite{Draganicetal} do not work in the immersion setting since th
ey heavily rely on the fact that the number of branch vertices  is much less than the size of the whole tournament. 
Note as well that we have made no attempt to optimise the constant $C$ in Theorem~\ref{thm:immersion1}, 
since the use of Lemma~\ref{thm:prob} in the proof caused $C$ to be quite far away from the lower bound, which we now discuss. 

\begin{observation}
For all $k \geq 2$, there exists a tournament on $2k-3$ vertices which does not contain a immersion of a transitive tournament on $k$ vertices.
\end{observation}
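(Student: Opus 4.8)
The plan is to take as our example a \emph{regular} tournament $R$ on $2k-3$ vertices. Since $2k-3$ is a positive odd integer, such a tournament exists, and in it every vertex has out-degree exactly $(2k-4)/2 = k-2$. We will show that $R$ contains no $1$-immersion of the transitive tournament $TT_k$ on $k$ vertices, which is exactly what is claimed.

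The only idea needed is a degree count at the source. Suppose for contradiction that $R$ contains a $1$-immersion of $TT_k$, and let $v_1, \dots, v_k$ be its branch vertices listed in the transitive order, so that $v_1$ is the source and $v_1 v_j \in E(TT_k)$ for every $j \in \{2, \dots, k\}$. Each such edge is replaced by a directed path $P_j$ of length $1$ or $2$ from $v_1$ to $v_j$; in particular $P_j$ begins with an out-edge $e_j$ of $v_1$. Since the paths $P_2, \dots, P_k$ are pairwise edge-disjoint, the edges $e_2, \dots, e_k$ are $k-1$ distinct out-edges of $v_1$ in $R$, and hence $v_1$ has out-degree at least $k-1$ in $R$.

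But every vertex of $R$ has out-degree $k-2 < k-1$, a contradiction; so $R$ has no $1$-immersion of $TT_k$ (and \emph{a fortiori} no strong one). There is essentially no obstacle here: the inputs are only the standard fact that a regular tournament exists on every odd number of vertices, together with the observation that the source branch vertex of a $1$-immersion of $TT_k$ — indeed of any immersion of $TT_k$ — must have out-degree at least $k-1$, being the tail of $k-1$ edge-disjoint directed paths. The same argument in fact shows that no tournament of maximum out-degree at most $k-2$ can contain such an immersion.
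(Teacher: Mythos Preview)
Your proof is correct and follows exactly the same approach as the paper: take a regular tournament on $2k-3$ vertices, and observe that the source branch vertex of a $1$-immersion of $TT_k$ would need out-degree at least $k-1$, which exceeds the regular out-degree $k-2$. The paper states this in two sentences; your version simply spells out the edge-disjointness step in more detail.
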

An immersion of a transitive tournament on $k$ vertices must contain a vertex with out-degree at least $k-1$.
So take any regular tournament on $2k-3$ vertices, i.e. one where every vertex has in-degree and out-degree exactly $k-2$.

If one is interested in finding an immersion of a complete digraph on $k$ vertices, then extra conditions are necessary. Indeed,
take a transitive tournament on any number of vertices, and replace the tournament within the $2k-3$ vertices of smallest out-degree by a regular tournament.
The resulting tournament has minimum out-degree $k-2$ and does not contain the desired immersion: the final $2k-3$ vertices cannot play the role of a branch vertex since every branch vertex must have out-degree at least $k-1$, while for any pair of possible branch vertices elsewhere, there is no directed path from the vertex of smaller out-degree to the vertex of larger out-degree.
We show that this lower bound is essentially tight. 

\begin{theorem}\label{thm:immersion2}
There exists a constant $C>0$ such that any tournament $T$ with $\delta^{+}(T)\geq Ck$ 
contains a strong $2$-immersion of a complete directed graph on $k$ vertices. 
\end{theorem}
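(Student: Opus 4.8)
The plan is to produce $k$ branch vertices $v_1,\dots,v_k$ together with, for every ordered pair $(v_i,v_j)$, a directed path $P_{ij}$ from $v_i$ to $v_j$ with at most three edges, the $P_{ij}$ pairwise edge-disjoint and with all internal vertices outside $\{v_1,\dots,v_k\}$. We plan to take the length-$1$ paths to be exactly the $\binom k2$ edges of $T$ inside the branch set, so the real task is, for each unordered pair $\{v_i,v_j\}$, to route one path of length $2$ or $3$ in the direction opposite to the tournament edge; call these the \emph{reverse demands}.

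\emph{Preprocessing.} At most $O(k)$ vertices of $T$ can have in-degree below $\beta k$ for a suitable large constant $\beta$: if $S$ were a set of $2\beta k$ such vertices then $T[S]$ would contain a vertex of in-degree at least $(|S|-1)/2>\beta k$ within $S$. Taking $C$ large, we forbid these vertices as branch vertices, so that every branch vertex we pick will have out-degree $\ge Ck$ and in-degree $\ge\beta k$ in $T$. The second, more delicate, reduction brings us to the case where $T$ itself is dense. One natural route is to pass to a sub-tournament $T'\subseteq T$ that is inclusion-minimal subject to $\delta^+(T')\ge Ck$; then $\delta^+(T')=Ck$ and every vertex of $T'$ is dominated by a vertex of out-degree exactly $Ck$, and from this one argues — here some genuine work is needed — that $|V(T')|=O(k)$; a further in-degree clean-up inside $T'$ then gives $\delta^+(T'),\delta^-(T')=\Omega(|V(T')|)$. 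Some such reduction is essentially unavoidable: there are tournaments with $\delta^+=Ck$ in which every directed path of length $\le 3$ leaving a vertex stays inside a fixed regular ``sink'' block, so any immersion of a complete digraph with short paths must already live inside such a block.

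\emph{Routing.} Assume now $n:=|V(T)|=O(k)$ and $\delta^+(T),\delta^-(T)=\Omega(n)$; pick any $k$ branch vertices, e.g.\ from the middle third of a median order of $T$, so that each has both in-degree and out-degree $\Omega(n)$, and reserve the branch-set edges. For a reverse demand from $v_i$ to $v_j$ we look for $a\in N^+(v_i)$ and $b\in N^-(v_j)$ with either $a=b$ (yielding a length-$2$ path) or $a\to b$ (a length-$3$ path). Unwinding edge-disjointness, the constraints reduce to: for each source the first internal vertices of its paths are distinct; for each target the last internal vertices are distinct; and the middle edges $ab$ of the length-$3$ paths are pairwise distinct. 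One then processes the $\binom k2$ demands greedily — or, cleaner for the bookkeeping, assigns $(a,b)$ to each demand at random and removes the few conflicts by alteration or the Lov\'asz Local Lemma — using that throughout the process each branch vertex retains $\Omega(n)$ free incident edges and every candidate internal vertex is touched only $O(k)$ times, while each edge is used at most once.

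\emph{Main obstacle.} The difficulty is hidden in the last step: a priori $N^+(v_i)$ and $N^-(v_j)$ could be oriented almost entirely the ``wrong'' way (nearly all edges running from $N^-(v_j)$ to $N^+(v_i)$), leaving a reverse demand with too few admissible middle edges, so that the routing jams. The heart of the proof is therefore to bound from below the number of edges from $N^+(v_i)$ to $N^-(v_j)$ — equivalently, to choose the dense core and the branch vertices so that no two of them sit in such an adversarial configuration. This is exactly where the density of $T$ secured in the preprocessing and the extra slack of allowing paths of length $3$ rather than only $2$ are both used; indeed the lower-bound constructions show that length $2$ alone does not suffice.
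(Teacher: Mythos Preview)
Your proposal is an honest high-level plan rather than a proof: both places you flag as needing ``genuine work'' are exactly where the real content lies, and neither is resolved.

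The reduction to $|T'|=O(k)$ is not established, and it is not clear it is even true. From minimality you correctly get that every vertex lies in the out-neighbourhood of some vertex of out-degree exactly $Ck$; calling that set $U$, the sub-tournament $T'[U]$ has a vertex of out-degree at least $(|U|-1)/2$, so $|U|\le 2Ck+1$, and hence $|T'|\le |U|(Ck+1)=O(k^2)$. But $O(k^2)$ is the natural stopping point of this argument, and a quadratic-size $T'$ with $\delta^+=Ck$ has density $\Theta(1/k)$, which is far too thin for the routing you describe afterwards. You would need a genuinely new idea to push to $O(k)$, and you have not supplied one.

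The second gap is more serious, because you correctly identify it as the heart of the matter and then leave it open. Even granting a dense host with $\delta^\pm=\Omega(n)$ and $n=O(k)$, there is no a priori reason the edges between $N^+(v_i)$ and $N^-(v_j)$ should not run almost entirely the wrong way; you say ``this is exactly where the density \dots\ is used'' but give no mechanism. The paper does not try to lower-bound this quantity at all. Instead it exploits failure: it picks candidate branch vertices $R_i$ whose out-degrees in the current host $T_i$ differ by at most $2k$, attempts to route, and if some pair $x_i,y_i$ cannot be connected it reads off that $B_i:=N^-_{T_i}(y_i)$ is oriented almost entirely into $A_i:=N^+_{T_i}(x_i)$ (outside a small set). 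The near-equality of $d^+(x_i)$ and $d^+(y_i)$ then forces the remainder $H_i$ to have size $O(k)$, and one either recurses into $D_i=A_i\cup H_i$ (which strictly shrinks) or finds $k$ vertices of low out-degree in $D_i$ and routes all their paths through the accumulated one-way bipartitions $(B_\ell,A_\ell)$. So the obstruction you name is not overcome; it is turned into the engine of the argument. Your plan contains no analogue of this step, and as written it is not a proof.
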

In particular, using our method, $C=59$ suffices. We aim to present a clear proof rather than improve the value of $C$ here. 
Finally, we observe that the order of the paths in the immersion is tight. Form $T$ by taking a blow-up of a directed triangle, with parts $U,V,W$ where $U\rightarrow V$, $V\rightarrow W$ and $W\rightarrow U$ and placing a transitive tournament in each of $U,V$ and $W$. 
Then, if $k\geq 4$, two branch vertices $x$ and $y$ must be within the same part, say $U$, and then either the path from $x$ to $y$ or the path from $y$ to $x$ in the immersion must have at least $3$ edges.

\subsection{Notation and organisation}\label{sec:notation}

Given a graph $G$, an \emph{immersion} $I$ of $G$ is any graph which can be obtained by replacing the edges of $G$ by pairwise edge-disjoint paths of any length. As is the case with subdivisions, we call the original vertices of $G$ in $I$ the \emph{branch} vertices of $I$.
Given a positive integer $t$, a \emph{$t$-immersion} is an immersion where all paths have length at most $t+1$.
A \emph{strong immersion} is an immersion where none of the internal vertices in paths between branch vertices are themselves branch vertices.

We use standard notation for directed graphs and tournaments: given a directed graph $G$ and vertex $x \in V(G)$, we write $N_G^{+}(x)$ ($N_G^{-}(x)$) for the out-neighbourhood (in-neighbourhood) of $x$ in $G$, and as usual omit the subscript $G$ if $G$ is clear from the context. We write $d_G^{+}(x):=|N_G^{+}(x)|$ and $d_G^{-}(x):=|N_G^{-}(x)|$.
We write $\delta^{+}(G)$ ($\delta^{-}(G)$) for the minimum out-degree (in-degree) in $G$. For two vertex sets $A,B \subseteq V(G)$, write $A \rightarrow B$ to say that every edge between $A$ and $B$ is directed from $A$ to $B$.

Throughout the paper, all logarithms are base $2$. Finally, let $[n]:=\{1,\dots,n\}$.

We prove Theorem~\ref{thm:immersion1} in Section~\ref{sec:proof1} and Theorem~\ref{thm:immersion2} in Section~\ref{sec:proof2},
and give some concluding remarks in Section~\ref{sec:conclusion}.

\section{Proof of Theorem~\ref{thm:immersion1}}\label{sec:proof1}

First we give a sketch of the proof. Order the vertices by decreasing outdegree $x_1,\dots,x_{Ck}$, pick a subset $S=\{x_{i_1},\dots,x_{i_k}\}$ of size $k$, and suppose we are trying to find an immersion using these $k$ vertices as branch vertices. Due to the ordering, the edges are more likely to be oriented `forwards', from $x_{i_t}$ to $x_{i_j}$ where $t<j$. We use as many forwards edges as we can for the immersion. For each backwards edge $e$, we must instead find a directed path of length $2$. Let $R_S(e)$ be the set of all possible middle vertices of these paths. We encode this information in a bipartite graph $G(S)$ with parts $X,Y$, with $X$ the backwards edges in $S$, and $Y$ the set of possible middle vertices of paths. We match $x \in X$ to $y \in Y$ if $y \in R_S(x)$. Now, a matching which covers $X$ precisely corresponds to a selection of edges which gives us the desired $1$-immersion. 

We show that there exists a choice of $S$ for which this matching exists: we choose a set randomly, and define vertices to be good or bad (with respect to this random choice) in such a way that, if one selects a subset of size $k$ where all vertices are good, then the matching can be found. We then show that with positive probability that there is a subset of size $k$ where all vertices are good. Since the initial set was chosen randomly, this implies the existence of the desired set $S$.

We now proceed with the proof. First we need a simple Hall-type lemma.

\begin{lemma}\label{lem:pairs}
Let $G=G[X,Y]$ be a bipartite graph and $\mathcal{H}$ a hypergraph with vertex set $X$. 
Suppose no vertex in $X$ is isolated in $G$.
Suppose that for every hyperedge $F\in \mathcal{H}$, and every positive $i\leq \log(|F|)$, 
there are at most $2^{i-1}$ vertices in $V(F)$ with degree in $G$ at most $2^{i+1}$. 
Further, suppose that every $v \in X$ lies in at most two hyperedges of $\mathcal{H}$.
Then we can find a pairing $\mathcal{P}$ from $X$ to $Y$ such that for every $F\in \mathcal{H}$ the pairing $\mathcal{P}$ restricted to $F$ forms a matching. 
\end{lemma}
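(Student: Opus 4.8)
The plan is to recast the statement as a list‑edge‑colouring problem and then build the pairing greedily, processing the vertices of $A$ in order of increasing $G$‑degree.

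First I would rewrite the degree hypothesis in a cleaner form. Fix $F\in\mathcal H$ and list its vertices as $v_1,\dots,v_{|F|}$ with $d_G(v_1)\le\cdots\le d_G(v_{|F|})$. Letting $i$ range over $1,\dots,\lfloor\log|F|\rfloor$ and substituting $p=2^{i-1}$, the assumption says precisely that $d_G(v_{p+1})\ge 4p+1$ for every power of two $p$ with $1\le p\le|F|/2$. Since the largest power of two which is at most $j-1$ is at least $j/2$, this gives $d_G(v_j)\ge 2j+1$ for all $2\le j\le |F|/2+1$; that is, the sorted degree sequence of any hyperedge grows at least linearly, with slope $2$, across its first half. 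The consequence I will use is this: put $d:=d_G(a)$ for $a\in F$ and let $c_F:=|\{v\in F:d_G(v)\le d\}|$. If $|F|\ge d-1$ then $c_F>|F|/2$ would force $d_G(v_{\lfloor|F|/2\rfloor+1})\le d$ and hence $d\ge|F|+2$, a contradiction, so $c_F\le|F|/2$; then $d\ge d_G(v_{c_F})\ge 2c_F+1$, whence $c_F-1\le(d-3)/2$. Otherwise $|F|\le d-2$ and trivially $c_F-1\le|F|-1\le d-3$.

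Now order $A$ by non‑decreasing $G$‑degree, breaking ties by a fixed linear order chosen consistently for any two vertices lying in the same pair of hyperedges, and process the vertices one by one: to the current vertex $a$ assign any $b\in N_G(a)$ not already used by an earlier‑processed vertex sharing a hyperedge with $a$. As $a$ lies in at most two hyperedges $F$ and $F'$, and earlier‑processed vertices of $F$ have $G$‑degree at most $d=d_G(a)$, the number of forbidden colours is at most $(c_F-1)+(c_{F'}-1)$, and the resulting pairing automatically restricts to a matching on every hyperedge. When both hyperedges of $a$ have size at least $d-1$, the displayed estimate shows this total is at most $d-3<d=|N_G(a)|$, so a legal colour always exists.

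The main obstacle is the remaining case, where $a$ lies in a hyperedge $F$ with $|F|$ much smaller than $d_G(a)$: then up to $|F|-1\approx d_G(a)$ vertices of $F$ — possibly alongside many vertices of a second such hyperedge — may already have been processed, and a greedy of this kind genuinely fails (one can build configurations meeting all the hypotheses whose conflict graph is locally too dense to admit any good ordering). To deal with it I would isolate the sub‑family $\mathcal H'\subseteq\mathcal H$ of hyperedges that are small relative to the degrees of their vertices and handle it by a list‑edge‑colouring argument. Identifying each $a\in A$ with an edge of the multigraph $M$ whose vertices are the hyperedges containing $a$, the reformulation shows that on $\mathcal H'$ every vertex of $F$ has $G$‑degree $O(|F|)$, so what is needed is a list‑edge‑colouring of $M$ in which each list $N_G(a)$ exceeds the degrees of both endpoints of $a$ — the regime of the kernel method à la Galvin: orient the line graph of $M$ using, at each hyperedge, the degree order of its vertices (consistent across shared hyperedges, since each $a$ lies in at most two of them), check that all out‑degrees fall strictly below the list sizes, and extract the colouring from kernel‑solvability. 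Splicing this onto the greedy colouring of the light part — and carefully treating vertices that straddle the two parts — yields the pairing $\mathcal P$.
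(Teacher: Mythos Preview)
Your core idea --- order $A$ by increasing $G$-degree and greedily assign each vertex a neighbour not already used by an earlier-processed vertex in a shared hyperedge --- is exactly what the paper does. The paper partitions $A$ into dyadic classes $V_i=\{v:2^i<d_G(v)\le 2^{i+1}\}$ (with $V_1$ collecting degree $\le 4$), processes them in order, and argues that when $v\in V_i$ is reached, each of the at most two hyperedges containing $v$ has at most $2^{i-1}$ already-processed vertices, so at most $2^i<d_G(v)$ neighbours are blocked. That is the entire proof.

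You are right that this last step invokes the hypothesis at the value of $i$ determined by $d_G(v)$, whereas the hypothesis is only stated for $i\le\log|F|$; your observation that the greedy can stall when $|F|$ is small relative to $d_G(v)$ is a fair reading of the lemma as written. The paper is simply sloppy here: the bound $i\le\log|F|$ appears to be an artefact (note the same stray ``$i\le\log(|F|)$'' even in the definition of the $V_i$), and in the application (Claim~2.2) the stronger condition --- the $j$-th smallest degree in each hyperedge is at least $4j$, for all $j$ --- is what is actually verified. Read with the hypothesis holding for all $i$, the one-paragraph greedy is complete and nothing further is needed.

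Your proposed salvage through list-edge-colouring, on the other hand, does not work as written. The description of $\mathcal H'$ is self-contradictory: you isolate the hyperedges that are ``small relative to the degrees of their vertices'' and then assert that on $\mathcal H'$ every vertex of $F$ has degree $O(|F|)$, which is the opposite condition. The Galvin kernel method requires the multigraph $M$ to be bipartite, and nothing in the hypotheses ensures this (three hyperedges pairwise sharing a vertex already give a triangle in $M$). Even in the bipartite case you would need $d_G(a)\ge\max(|F|,|F'|)$ for every edge $a=FF'$ of $M$, which the hypothesis does not provide: the least-degree vertex of a large hyperedge may have degree $1$. Finally, the ``splicing'' of the greedy phase with the list-colouring phase is left entirely unspecified, and vertices straddling the two regimes would interact. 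In short: the greedy half of your argument \emph{is} the paper's proof; the list-colouring rescue is both unnecessary under the intended hypothesis and, as sketched, not correct.
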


\begin{proof}
Let $V_1:=\{v \in X : d_G(v) \leq 4\}$ and
for each positive $2 \leq i \leq \log(|F|)$, 
let $V_i:=\{v \in X :  2^{i} < d_G(v) \leq 2^{i+1}\}$.
We choose the pairing from $X$ to $Y$ greedily, going through the sets $V_1,V_2,\dots$ in turn.

First for $i=1$, we greedily pick any neighbour in $Y$ for each vertex in $V_1$;
since no $F$ contains more than $1$ vertex from $V_1$, the pairing so far restricted to any $F$ is a matching.

Now let $i \geq 2$. Let $v \in V_i$.
Let $F,F'$ be the at most two hyperedges which $v$ lies in. 
Each $F$ and $F'$ contain at most $2^{i-1}$ vertices with degree in $G$ at most $2^{i+1}$.
Therefore there are at most $2^i$ vertices which already have partners in $Y$ and cannot have the same partner as $v_i$.
Since $d_G(v_i)>2^i$, there is at least one choice of partner in $Y$ so that the pairing restricted to any hyperedge is still a matching. 
Since $i$ and $v_i$ were arbitrary, this completes the proof.
\end{proof}

\begin{proof}[Proof of Theorem~\ref{thm:immersion1}]
Let $C$ be a constant to be chosen later. 
Let $T$ be a tournament on $Ck$ vertices.
Let $x_1,\ldots ,x_{Ck}$ be an ordering $<$ of the vertices of $T$ by decreasing out-degree in $T$ i.e. $d^{+}(x_i) \geq d^{+}(x_{i+1})$. 
For each $x_i \in V(T)$, let $A^{-}(x_i):=N^{-}(x_i) \cap \{x_j : j>i\}$ and $B^{+}(x_i):=N^{+}(x_i) \cap \{x_j : j<i\}$. 
(Morally $A$ stands for ahead and $B$ stands for behind.)
For every $x\in V(T)$, we define an ordering $<_x^{-}$ of $A^{-}(x)$ by increasing out-degree within $A^{-}(x)$ i.e. $u<w$ if $d_{A^{-}(x)}^{+}(u)\leq d_{A^{-}(x)}^{+}(w)$. 
Similarly we define an ordering $<_x^{+}$ of $B^{+}(x)$ by increasing in-degree within $B^{+}(x)$.
Given $\ell\in [|A^{-}(x)|]$, let $J^{\ell}(A^{-}(x))$ be the interval of the first $\ell$ vertices of $A^{-}(x)$ in $<_x^{-}$.
Define $J^{\ell}(B^{+}(x))$ analogously.

Given a fixed subset $S\subset V(T)$ of the vertices of $T$, say $x_{i_1},x_{i_2},\ldots, x_{i_k}$ (with $i_1<i_2<\cdots<i_k$), 
we define $D(S)$ to be the digraph with vertex set $S$ together with the backward edges of $T$, i.e. $x_{i_j} x_{i_t}$ where $j>t$. 
For each such backwards edge $e=x_{i_j}x_{i_t}$ we define
\begin{align}\label{eq:Rwhichworks}
R_S(e):= \{z \in V(T) \setminus S : x_{i_t}z, zx_{i_j} \in E(T) \}.
\end{align} 

Let $G(S)$ be a bipartite graph with $X:=E(D(S))$, $Y:=V(T)\setminus S$, and for each $a \in X, b \in Y$, we have $ab \in E(G(S))$ if $b \in R_S(a)$. 
Further, for each $x \in S$, let $F^-_x$ be the set of all in-edges of $x$ in $X$, and $F^+_x$ the set of all out-edges of $x$ in $X$.
Let $\mathcal{H}(S)$ be a hypergraph with vertex set $X$, and edge set $\{F^-_x,F^+_x : x \in S\}$. 
Now observe that if the pair $(G(S),\mathcal{H}(S))$ satisfies the conditions of Lemma~\ref{lem:pairs}, then we can find a strong $1$-immersion $I$ in $T$ which has branch vertex set $S$.
For each pair of vertices $x_{i_t}, x_{i_j} \in S$ with $t<j$, 
if $x_{i_t} x_{i_j} \in E(T)$ then we include $x_{i_t} x_{i_j}$ in $I$. 
If not, then put the directed path of length $2$ from $x_{i_t}$ to $x_{i_j}$ which goes via the chosen representative $s_{x_{i_j}x_{i_t}} \in R_S(x_{i_j}x_{i_t})$ in $I$.
Observe that the conclusion of Lemma~\ref{lem:pairs} precisely ensures that all paths of length $2$ in $I$ are pairwise edge-disjoint. 
In particular, it ensures that if $z \in R_S(x_{i_j}x_{i_t})$ is chosen for the edge $x_{i_j}x_{i_t}$, then there is no other $x_{i_q}$ for which $z$ is chosen for the edge $x_{i_j}x_{i_q}$ or the edge $x_{i_q}x_{i_t}$.
Further, the definition of $R_S(e)$ ensures that no path of length $2$ in $I$ contains a branch vertex as the middle vertex of the path.
As well as implying that all paths of length $2$ are pairwise edge-disjoint from all paths of length $1$ (which in turn ensures that $I$ is indeed an $1$-immersion),  
this implies that $I$ is a strong $1$-immersion. 

It now suffices to show that we can find an $S$ as above of size at least $k$.
To this end, we will take each vertex of $V(T)$ independently with probability $p$ (also a constant to be chosen later). 
Let $S$ be the random set obtained. 
We will show that with positive probability $S$ contains a subset $S'$ of size at least $k$ for which the pair $(G(S'),\mathcal{H}(S'))$ satisfies the conditions of Lemma~\ref{lem:pairs}. 

Given a vertex $x \in V(T)$, we say it is \emph{in-bad} if: 
\begin{enumerate}
\item $x\in S $, and either;
\item there exists $i \in [|A^{-}(x)|]$ such that $|S\cap J^{i}(A^{-}(x))| > i/10 $, or;
\item there exists $i \in [|A^{-}(x)|]$ such that, letting $z_i$ be the $i$-th vertex in the ordering $<_x^{-}$ of $A^{-}(x)$, then at least $i/20$ of the first $\lfloor i/2 \rfloor$ vertices in $N^{+}(x) \cap N^{-}(z_i)$ are in $S$. 
\end{enumerate}
Symmetrically, we say that $x$ is \emph{out-bad} if:
\begin{enumerate}
\item $x\in S$, and either;
\item there exists $i \in [|B^{+}(x)|]$ such that $|S\cap J^{i}(B^{+}(x))| > i/10 $, or;
\item there exists $i \in [|B^{+}(x)|]$ such that, letting $z_i$ be the $i$-th vertex in the ordering $<_x^{+}$ of $B^{+}(x)$, then at least $i/20$ of the first $\lfloor i/2 \rfloor$ vertices in $N^{-}(x) \cap N^{+}(z_i)$ are in $S$. 
\end{enumerate}
 
We now make two claims which together will quickly imply the result.

\begin{claim}\label{cl:1}
Let $S$ be a subset of $V(T)$ such that no vertex in $S$ is in-bad or out-bad. 
Then $(G(S),\mathcal{H}(S))$ satisfies the conditions of Lemma~\ref{lem:pairs}. 
\end{claim}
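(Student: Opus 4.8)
The goal is to verify the three hypotheses of Lemma~\ref{lem:pairs} for the pair $(G(S),\mathcal{H}(S))$: (i) no vertex of $A=E(D(S))$ is isolated in $G(S)$; (ii) for every hyperedge $F$ and every $i\le\log|F|$, at most $2^{i-1}$ vertices of $F$ have $G(S)$-degree at most $2^{i+1}$; (iii) every $a\in A$ lies in at most two hyperedges. Condition (iii) is immediate: a backward edge $e=x_{i_j}x_{i_t}$ of $D(S)$ lies exactly in $F^-_{x_{i_j}}$ and $F^+_{x_{i_t}}$, so in precisely two hyperedges. So the work is in (i) and (ii).

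For a backward edge $e=x_{i_j}x_{i_t}$ (so $i_j>i_t$ in the global order $<$), we have $\deg_{G(S)}(e)=|R_S(e)|=|N^+(x_{i_t})\cap N^-(x_{i_j})\setminus V(S)|$. I would first get a lower bound on $|N^+(x_{i_t})\cap N^-(x_{i_j})|$ \emph{before} deleting $S$. Here is where the ordering by out-degree and the auxiliary orderings $<^-_x$, $<^+_x$ enter. Fix the hyperedge $F=F^-_x$ for some branch vertex $x$; its vertices are the backward in-edges $zx$ with $z\in A^-(x)$, and they are naturally indexed by the position $\ell$ of $z$ in the ordering $<^-_x$ of $A^-(x)$ (increasing out-degree within $A^-(x)$). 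If $z=z_\ell$ is the $\ell$-th such vertex, then since $z_1,\dots,z_\ell$ all have out-degree within $A^-(x)$ at most $d^+_{A^-(x)}(z_\ell)$, and $x$ receives edges from all of $A^-(x)$... more carefully: I want to show $|N^+(x)\cap N^-(z_\ell)|$ is large, of order $\ell$. The point is that $z_\ell$ has small out-degree inside $A^-(x)$ precisely because it comes early in $<^-_x$; but $z_\ell\in N^-(x)$, and the vertices $z_1,\dots,z_\ell$ all lie in $N^-(x)$, so among these $\ell$ vertices, the ones dominated by $z_\ell$ (equivalently in $N^-(z_\ell)$) — one uses that $z_\ell$ has out-degree at least (roughly) $\ell/2$ among $\{z_1,\dots,z_\ell\}$ by an averaging/tournament counting argument, hence $|N^+(x)\cap N^-(z_\ell)|\ge\ell/2$ — no wait, I want $z_\ell$'s \emph{in}-neighbours among an $\ell$-set, which is again at least about $\ell/2$ by the same tournament counting since these vertices have out-degree at most that of $z_\ell$ inside the set. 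This gives $|N^+(x)\cap N^-(z_\ell)|\ge \lfloor \ell/2\rfloor$ (or a constant fraction), which is why conditions~(3) of in-bad/out-bad are phrased with "the first $\lfloor i/2\rfloor$ vertices in $N^+(x)\cap N^-(z_i)$".

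Now deletion of $S$: the definition of "in-bad" is exactly calibrated to control this. If $e=z_\ell x$ is an in-edge of $x$ lying at position $\ell$ in $F^-_x$, then $\deg_{G(S)}(e)\ge \lfloor\ell/2\rfloor - (\text{number of the first }\lfloor\ell/2\rfloor\text{ vertices of }N^+(x)\cap N^-(z_\ell)\text{ that lie in }S)$. Since $x$ is not in-bad, clause~(3) fails, so fewer than $\ell/20$ of those are in $S$, giving $\deg_{G(S)}(e)\gtrsim \ell/2-\ell/20\gtrsim \ell/2$. In particular for $\ell\ge 1$ this is $\ge 1$, yielding (i) (isolation-freeness) — one should check the $\ell=1$ and $\ell=2$ boundary cases directly, where $\lfloor\ell/2\rfloor$ could be $0$; here one uses that $z_1$ or $z_2$ itself, or its relation to $x$, provides a representative not in $S$, again via non-badness, or one simply notes $R_S(e)\supseteq\{z_\ell\}$ is false but a direct tournament argument still gives a non-$S$ vertex. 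For (ii): the $\ell$-th vertex of $F^-_x$ has $\deg_{G(S)}$ of order $\ell$, so $\deg_{G(S)}(e)\le 2^{i+1}$ forces $\ell=O(2^{i})$, i.e. $\ell$ lies in an interval $J^{c2^i}(A^-(x))$ for an absolute constant $c$; but clause~(2) of non-in-badness says $|S\cap J^{m}(A^-(x))|\le m/10$ for all $m$ — and since the vertices of $F^-_x$ in $G(S)$ are only those $z$ with $z\in A^-(x)$ AND... hmm, actually the hyperedge $F^-_x$ as a subset of $A=E(D(S))$ consists only of edges among \emph{branch vertices} in $S$, so its size is itself at most $|S\cap A^-(x)|$, and clause~(2) bounds how many of the first $m$ positions are in $S$. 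Combining: the number of $e\in F^-_x$ with $\deg_{G(S)}(e)\le 2^{i+1}$ is at most $|S\cap J^{c2^i}(A^-(x))|\le c2^i/10 \le 2^{i-1}$ provided $c/10\le 1/2$, i.e. $c\le 5$, which the constants in the badness definition ($1/10$, $1/20$, $\lfloor i/2\rfloor$) are set up to guarantee. The symmetric argument with $F^+_x$, $B^+(x)$, $<^+_x$ and "out-bad" handles out-edges.

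\textbf{Main obstacle.} The crux — and the only genuinely non-bookkeeping step — is the tournament counting lemma that turns "$z_\ell$ is early in the out-degree ordering $<^-_x$ of $A^-(x)$" into the lower bound "$|N^+(x)\cap N^-(z_\ell)|\ge\lfloor\ell/2\rfloor$" (and its symmetric counterpart). One has to argue that within the set $\{z_1,\dots,z_\ell\}$, which is a sub-tournament all of whose vertices have out-degree (in $A^-(x)$, hence a fortiori one must be careful that out-degree in $A^-(x)$ controls out-degree in the subset) at most $d^+_{A^-(x)}(z_\ell)$, the vertex $z_\ell$ nonetheless has in-degree at least $\lfloor\ell/2\rfloor$ — which follows because in any tournament on $\ell$ vertices some vertex has out-degree $\ge\lceil(\ell-1)/2\rceil$, hence (by the monotonicity of the ordering) $z_\ell$ does, so its in-degree in the set is $\le\lfloor(\ell-1)/2\rfloor$... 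I need the reverse inequality, so in fact I want the in-degree large, meaning I should bound using that $z_\ell$ has the \emph{largest} out-degree in $A^-(x)$ among $z_1,\dots,z_\ell$, wait that gives out-degree large not in-degree large. The correct formulation, which I'd pin down carefully, is: the $\lceil \ell/2\rceil$ vertices $z_{\lceil\ell/2\rceil},\dots,z_\ell$ all have out-degree in $A^-(x)$ at least as large as the median, and a counting argument over the bipartite-like count of edges from $\{z_1,\dots,z_\ell\}$ into $N^+(x)\cap N^-(z_\ell)$ yields the bound — this is the step I expect to need the most care to state with the right constants, and getting it slightly wrong would propagate into the $1/10$, $1/20$ thresholds. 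Everything downstream is then a routine substitution of constants.
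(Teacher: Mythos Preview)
Your overall architecture matches the paper's: verify hypothesis (iii) trivially, and for a hyperedge $F^-_x$ lower-bound $|R_S(z x)|$ in terms of the position of $z$ in the ordering $<_x^{-}$ of $A^{-}(x)$, using clauses (2) and (3) of ``not in-bad''. The bookkeeping for condition~(ii) --- translating $|R_S(z_jx)|\ge cj$ into ``at most $2^{i-1}$ edges have degree $\le 2^{i+1}$'' via clause~(2) --- is also right. But the step you flag as the ``main obstacle'' is in fact a genuine gap, and your proposed attacks on it do not work.

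The tournament counting you describe can only produce vertices inside $A^{-}(x)$, and every vertex of $A^{-}(x)$ lies in $N^{-}(x)$, not in $N^{+}(x)$. So whether you argue that $z_\ell$ has many out-neighbours or many in-neighbours among $\{z_1,\dots,z_\ell\}$, you land in $N^{-}(x)\cap N^{+}(z_\ell)$ or in $N^{-}(x)\cap N^{-}(z_\ell)$ --- never in $N^{+}(x)\cap N^{-}(z_\ell)$, which is what $R_S(z_\ell x)$ requires. Your ``hence $|N^{+}(x)\cap N^{-}(z_\ell)|\ge \lfloor\ell/2\rfloor$'' and the subsequent reformulations are all unjustified for this reason. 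The missing idea is to use the \emph{global} out-degree ordering $<$: since $x$ precedes $z_\ell$ there, $d^{+}(x)\ge d^{+}(z_\ell)$, which gives
\[
|N^{+}(x)\setminus N^{+}(z_\ell)| \;\ge\; |N^{+}(z_\ell)\setminus N^{+}(x)|.
\]
The tournament argument inside $A^{-}(x)$ (where $z_\ell$, being last in $<_x^{-}$ among the first $\ell$, has out-degree at least $\lfloor\ell/2\rfloor$ there) yields $|N^{+}(z_\ell)\setminus N^{+}(x)|\ge\lfloor\ell/2\rfloor$; the inequality above then transfers this to $|N^{+}(x)\cap N^{-}(z_\ell)|=|N^{+}(x)\setminus N^{+}(z_\ell)|\ge\lfloor\ell/2\rfloor$, after which clause~(3) finishes the bound on $|R_S(z_\ell x)|$ exactly as you outline.

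One smaller point: keep separate the position $i$ of $z$ in $A^{-}(x)$ and its position $j$ among $S\cap A^{-}(x)$. Clause~(2) gives $j\le i/10$, so automatically $i\ge 10$ whenever $z\in S$; this is what makes the floor $\lfloor i/2\rfloor$ harmless and removes the $\ell=1,2$ boundary worries you raise.
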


\begin{claim}\label{cl:2}
The probability $x$ is in-bad or out-bad is at most 
$$c:=2p \left( \sum_{i=1}^{\infty} \left( (10ep)^{i/10} + (10ep)^{i/20} \right) \right).$$ 
\end{claim}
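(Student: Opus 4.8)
The plan is to bound the probability of each of the four "bad" events separately and then union bound. By symmetry, the in-bad and out-bad cases are identical, so I would only treat the in-bad events and then double the bound at the end, which explains the factor of $2$ out front in $c$.

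First observe that conditions (2) and (3) for being in-bad are only ever triggered when $x \in S$, which happens with probability $p$; I would condition on this event (noting that the events described in (2) and (3) concern vertices \emph{other} than $x$, namely vertices in $A^{-}(x) \subseteq \{x_j : j > i\}$, whose membership in $S$ is independent of whether $x \in S$). So the whole bound carries an overall factor of $p$, and it remains to bound, for a fixed $x$, the probability that (2) holds and the probability that (3) holds. For (2): fix $i \in [|A^{-}(x)|]$. The set $J^{i}(A^{-}(x))$ has exactly $i$ vertices, each included in $S$ independently with probability $p$, so $|S \cap J^{i}(A^{-}(x))|$ is $\mathrm{Bin}(i,p)$, and we want the probability it exceeds $i/10$. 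The standard Chernoff-type bound $\Pr[\mathrm{Bin}(n,p) \ge a] \le \binom{n}{a} p^a \le (enp/a)^a$ gives, with $n = i$ and $a = i/10$, a bound of $(e i p / (i/10))^{i/10} = (10ep)^{i/10}$. Summing over all $i \ge 1$ gives $\sum_{i=1}^{\infty} (10ep)^{i/10}$, matching the first term in the sum. For (3): fix $i$, let $z_i$ be the $i$-th vertex of $A^{-}(x)$ in $<_x^{-}$, and consider the first $\lfloor i/2 \rfloor$ vertices of $N^{+}(x) \cap N^{-}(z_i)$; again each is in $S$ independently with probability $p$, so the count is stochastically dominated by $\mathrm{Bin}(\lfloor i/2 \rfloor, p)$, and we want the probability it is at least $i/20$. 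The same Chernoff bound with $n = \lfloor i/2 \rfloor \le i/2$ and $a = i/20$ yields at most $(e(i/2)p/(i/20))^{i/20} = (10ep)^{i/20}$, and summing over $i$ gives the second term $\sum_{i=1}^{\infty}(10ep)^{i/20}$.

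Putting it together: conditional on $x \in S$, the probability that $x$ is in-bad is at most $\sum_i \big((10ep)^{i/10} + (10ep)^{i/20}\big)$ by a union bound over $i$ and over the two conditions; multiplying by $\Pr[x \in S] = p$ and then by $2$ for the symmetric out-bad case gives exactly $c$. The one point requiring a little care — the main (minor) obstacle — is the independence bookkeeping in condition (3): one must check that the collection of vertices being tested for membership in $S$ (the first $\lfloor i/2\rfloor$ vertices of $N^{+}(x)\cap N^{-}(z_i)$, in whatever fixed ordering was specified) is a fixed set once $x$ and $i$ are fixed — it does not depend on the random set $S$ — so that their indicators really are i.i.d.\ $\mathrm{Bernoulli}(p)$ and the Chernoff bound applies cleanly; the same remark applies to $J^i(A^-(x))$ in condition (2). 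Everything else is a routine application of the binomial tail bound and a geometric-series union bound.
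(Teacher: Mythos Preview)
Your proof is correct and follows essentially the same approach as the paper: condition on $x\in S$ (giving the factor $p$), bound each event in (2) and (3) by a binomial tail estimate yielding $(10ep)^{i/10}$ and $(10ep)^{i/20}$ respectively, union-bound over $i$, and double for the symmetric out-bad case. The only cosmetic difference is that the paper quotes the Alon--Spencer form $\Pr[X\ge\beta pn]<[e^{\beta-1}\beta^{-\beta}]^{pn}$ with $\beta=1/(10p)$, whereas you use the equivalent elementary bound $\Pr[\mathrm{Bin}(n,p)\ge a]\le\binom{n}{a}p^a\le(enp/a)^a$; both produce the identical numerical bound, and your remark about the deterministic nature of the sets in (3) is a helpful clarification the paper leaves implicit.
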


Let $S' \subseteq S$ be such that all $x \in S'$ are not in or out-bad. 
We have 
\begin{align*}
\mathbb{E}[|S'|] & \geq \mathbb{E} [|S|] - \mathbb{E} [|\{x \in V(T): x \text{ in-bad or out-bad}\}|] \\
& \geq Ck (p-c) \geq k,
\end{align*}
where the last inequality follows by suitable choices of $p$ and $C$; 
e.g. with $p=\frac{1}{10e \cdot 4^{20}}$ we obtain $c = \frac{4p}{5}$ and so $C \geq \frac{5}{p} = 50e \cdot 4^{20}$ suffices. 

It remains to prove each of the claims.

\begin{proof}[Proof of Claim~\ref{cl:1}]
It is clear that every $a \in X$ (recall that $a$ is a backwards edge of $T$) lies in at most two hyperedges of $\mathcal{H}(S)$. 
Now let $x\in S$ and consider $F^-_x$, the set of in-edges of $x$ in $X$.
Let $Z=\{z_1,\dots,z_{|F^-_x|}\}$ be the other ends of these in-edges, i.e. the in-neighbours of $x$ in $S$, 
ordered by $<_x^{-}$.

Fix $j \in [|F^-_x|]$.
Choose the $i \in [|A^{-}(x)|]$ such that the $i$-th vertex in the ordering $<_x^{-}$ of $A^-(x)$ is $z_j$. 
Since $x$ is not in-bad, by (2) we have that $j \leq  i/10$ (and $i\geq 10$).

Recall that any tournament on $i$ vertices contains a vertex of out-degree at least $\lfloor i/2 \rfloor$,
in particular this is true of the subtournament $J^i(A^{-}(x))$.
Since $z_j$ has out-degree in $A^{-}(x)$ at least as large as all the other vertices in $J^i(A^{-}(x))$, we therefore have $d_{A^{-}(x)}^{+}(z_j) \geq \lfloor i/2 \rfloor$.
Also observe that all vertices $y \in N_{A^{-}(x)}^{+}(z_j)$ satisfy that $z_jy$ and $yx$ are edges of $T$, 
and so $|N^+(z_j) \setminus N^+(x)| \geq \lfloor i/2 \rfloor$.
Since $x<z_j$ in the original ordering $<$,  we also must have $|N^+(x) \setminus N^+(z_j)| \geq \lfloor i/2 \rfloor$. 
Finally observe by (3) that at most $\lfloor i/20 \rfloor$ of the first $\lfloor i/2 \rfloor$ of these vertices are in $S$, in other words 
we have $|R(z_jx)| \geq \lfloor i/2 \rfloor - \lfloor i/20 \rfloor \geq 4i/10 \geq 4j$. 
(See Figure~\ref{fig:1} for an example.)

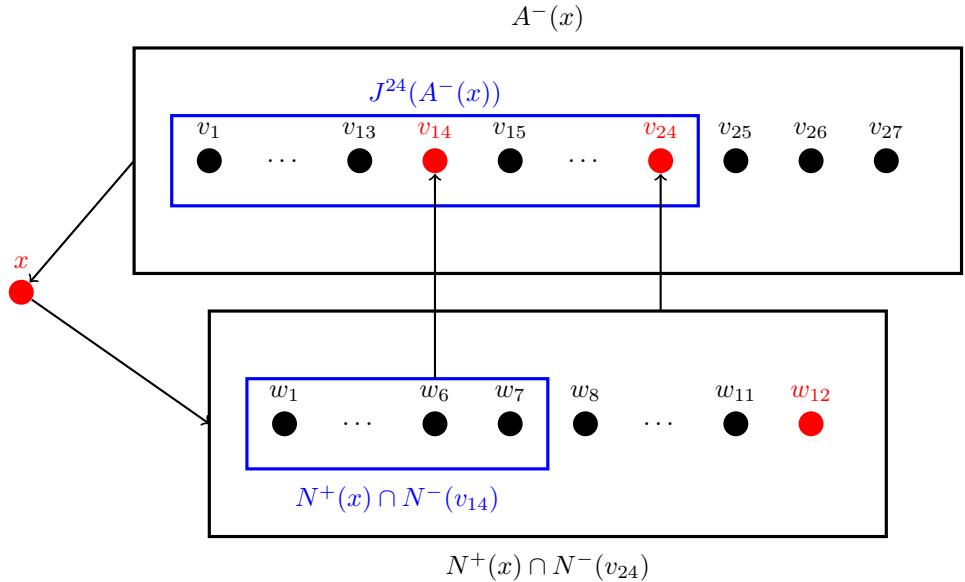
\begin{figure}[h]
\begin{center}
\begin{tikzpicture}[scale=1.0]

\filldraw[color=black!100, fill=red!0, very thick](1.5,1.5) rectangle (12.5,4.5);
\node at (7,4.9) {$A^{-}(x)$};

\filldraw[color=blue!100, fill=red!0, very thick](2,2.4) rectangle (9,3.6);
\node at (5.5,3.9) {{\color{blue}$J^{24}(A^{-}(x))$}};

\node[vertex,color=red!100] at (0,1.25) (x) {};
\node at (0,1.65) {{\color{red}$x$}};

\node[vertex] at (2.5,3) (x1) {};
\node[vertex] at (4.5,3) (x13) {};
\node[vertex,color=red!100] at (5.5,3) (x14) {};
\node[vertex] at (6.5,3) (x15) {};
\node[vertex,color=red!100] at (8.5,3) (x24) {};
\node at ($(x1)!.5!(x13)$) {\ldots};
\node at ($(x15)!.5!(x24)$) {\ldots};
\node[vertex] at (9.5,3) (x25) {};
\node[vertex] at (10.5,3) (x26) {};
\node[vertex] at (11.5,3) (x27) {};
\node at (2.5,3.4) {$v_1$};
\node at (4.5,3.4) {$v_{13}$};
\node at (5.5,3.4) {{\color{red}$v_{14}$}};
\node at (6.5,3.4) {$v_{15}$};
\node at (8.5,3.4) {{\color{red}$v_{24}$}};
\node at (9.5,3.4) {$v_{25}$};
\node at (10.5,3.4) {$v_{26}$};
\node at (11.5,3.4) {$v_{27}$};

\filldraw[color=black!100, fill=red!0, very thick](2.5,-2) rectangle (11.5,1);
\node at (7,-2.4) {$N^{+}(x) \cap N^{-}(v_{24})$};

\filldraw[color=blue!100, fill=red!0, very thick](3,-1.1) rectangle (7,0.1);
\node at (5,-1.5) {{\color{blue}$N^{+}(x) \cap N^{-}(v_{14})$}};

\node[vertex] at (3.5,-0.5) (w1) {};
\node[vertex] at (5.5,-0.5) (w6) {};
\node[vertex] at (6.5,-0.5) (w7) {};
\node[vertex] at (7.5,-0.5) (w8) {};
\node[vertex] at (9.5,-0.5) (w11) {};
\node at ($(w1)!.5!(w6)$) {\ldots};
\node at ($(w8)!.5!(w11)$) {\ldots};
\node[vertex,color=red!100] at (10.5,-0.5) (w12) {};
\node at (3.5,-0.1) {$w_1$};
\node at (5.5,-0.1) {$w_{6}$};
\node at (6.5,-0.1) {$w_{7}$};
\node at (7.5,-0.1) {$w_{8}$};
\node at (9.5,-0.1) {$w_{11}$};
\node at (10.5,-0.1) {{\color{red}$w_{12}$}};

\draw[<-,thick](x) -- (1.5,3);
\draw[->,thick](x) -- (2.5,-0.5);
\draw[<-,thick](x14) -- (5.5,0.1);
\draw[<-,thick](x24) -- (8.5,1);
\end{tikzpicture}
\end{center}
\caption{An example where $x$ is in $S$ and $x$ is not in-bad. Vertices in red ($x,v_{14},v_{24},w_{12}$) are in $S$.  The vertices $v_1,\dots,v_{27}$ are ordered by $<^{-}_{x}$, i.e. we have $d^{+}_{A^{-}(x)}(v_i) \geq d^{+}_{A^{-}(x)}(v_j)$ for all $j<i$. In particular, this guarantees $d^{+}_{A^{-}(x)}(v_{14}) \geq 7$ and $d^{+}_{A^{-}(x)}(v_{24}) \geq 12$. In turn, we see $\{w_1,\dots,w_7\} \in N^{+}(x) \cap N^{-}(v_{14})$ and $\{w_1,\dots,w_{12}\} \in N^{+}(x) \cap N^{-}(v_{24})$. Relabelling $z_1:=v_{14}$ and $z_2:=v_{24}$, we see $|R(z_1 x)|=7 \geq 4$ and since $R(z_2 x) = (N^{+}(x) \cap N^{-}(v_{24})) \setminus S$, we have $|R(z_2 x)|=11 \geq 8$.} 
\label{fig:1}
\end{figure}

First this implies that $X$ has no isolated vertices in $G(S)$. 
Further, since $j \in [|F^-_x|]$ was arbitrary, 
it follows that the hyperedge $F^-_x$ contains at most $2^{i-1}$ vertices with degree in $G(S)$ at most $2^{i+1}$ for each $i$, as required.  

For $F^+_x$ the set of out-edges of $x$ in $D(S)$, a symmetrical argument to the above gives the same conclusion for $F^+_x$.
\end{proof}

\begin{proof}[Proof of Claim~\ref{cl:2}]
We will use the following bound on expectation.
\begin{lemma}[Theorem A.1.12 \cite{prob}]\label{thm:prob}
Suppose there are $n$ independent trials, each with probability of success $p$. Let $Z$ be the number of successes.
For $\beta>1$, we have $$ \mathbb{P}[Z \geq \beta p n]<[e^{\beta-1} \beta^{-\beta}]^{pn}.$$
\end{lemma}
Let $Z_i$ count the number of elements in $J^{i}(A^{-}(x))$ which are included in $S$. 
Using Lemma~\ref{thm:prob} with $\beta=1/(10p)$, we have 
\begin{align*}
\mathbb{P}[ Z_i \geq i/10] & = \mathbb{P}[ Z_i \geq \beta pi] < \left(e^{\frac{1}{10p}-1} \left(\frac{1}{10p}\right)^{-\frac{1}{10p}}\right)^{pi} < (10ep)^{i/10}.
\end{align*}
Let $U_i$ count the number of the first $i/2$ elements in $N^{+}(x) \cap N^{-}(z_i)$ which are in $S$.
A near identical calculation to the above yields $\mathbb{P}[U_i \geq i/20] <(10ep)^{i/20}$. 
We have identical calculations to the above for the events of being out-bad.

Overall, note that item (1) from the definitions of in-bad and out-bad is independent of items (2) and (3), for both being in-bad and out-bad.
Therefore the probability that $x$ is in-bad or out-bad is at most
\begin{align*}
2p \left( \sum_{i=1}^{\infty} \left( (10ep)^{i/10} + (10ep)^{i/20} \right) \right),
\end{align*}
as required.
\end{proof}
\end{proof}


\section{Proof of Theorem~\ref{thm:immersion2}}\label{sec:proof2}

First we need two simple lemmas on tournaments.

\begin{lemma}
Let $T$ be a tournament on $n$ vertices and let $0<\varepsilon<1$. 
Then $T$ contains at least $n(1-\varepsilon)$ vertices with out-degree and in-degree at least $\varepsilon n/4$. 
\end{lemma}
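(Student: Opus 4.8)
The plan is a short double‑counting argument applied separately to the vertices of small out‑degree and of small in‑degree. Set
$$
B^{+}:=\{v\in V(T): d^{+}(v)<\varepsilon n/4\}\quad\text{and}\quad B^{-}:=\{v\in V(T): d^{-}(v)<\varepsilon n/4\},
$$
so that the vertices we want to count are exactly those of $V(T)\setminus(B^{+}\cup B^{-})$. It therefore suffices to show $|B^{+}\cup B^{-}|\le\varepsilon n$, for which I will bound $|B^{+}|$ and $|B^{-}|$ by roughly $\varepsilon n/2$ each and observe that the two sets are disjoint.

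First I would bound $|B^{+}|$. Consider the sub‑tournament $T[B^{+}]$ on $b:=|B^{+}|$ vertices. Every tournament on $b$ vertices has a vertex of out‑degree at least $(b-1)/2$ inside it (equivalently, summing out‑degrees, $\binom{b}{2}\le\sum_{v\in B^{+}}d^{+}_{T[B^{+}]}(v)\le\sum_{v\in B^{+}}d^{+}_{T}(v)$), and such a vertex $w$ satisfies $d^{+}_{T}(w)\ge d^{+}_{T[B^{+}]}(w)\ge(b-1)/2$. Since $w\in B^{+}$ we also have $d^{+}_{T}(w)<\varepsilon n/4$, whence $(b-1)/2<\varepsilon n/4$, i.e.\ $b\le\varepsilon n/2$ (after rounding). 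The identical argument applied to in‑degrees in $T[B^{-}]$ gives $|B^{-}|\le\varepsilon n/2$.

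Finally, $B^{+}$ and $B^{-}$ are disjoint: if $v\in B^{+}\cap B^{-}$ then $n-1=d^{+}(v)+d^{-}(v)<\varepsilon n/2<n-1$ for $0<\varepsilon<1$, a contradiction. Hence $|B^{+}\cup B^{-}|=|B^{+}|+|B^{-}|\le\varepsilon n$, and so at least $n-\varepsilon n=n(1-\varepsilon)$ vertices have out‑degree and in‑degree at least $\varepsilon n/4$, as claimed. I do not anticipate a genuine obstacle here — this is a warm‑up counting lemma — the only mild care needed is the integer rounding in the step $(b-1)/2<\varepsilon n/4$ and the trivial small‑$n$ edge cases, which are comfortably absorbed by the slack built into the threshold $\varepsilon n/4$ (as opposed to, say, $\varepsilon n/2$).
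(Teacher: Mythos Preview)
Your argument is correct (modulo the acknowledged $O(1)$ rounding, which the paper's own proof is equally casual about) and takes a genuinely different route from the paper. The paper picks an ordering $x_1,\dots,x_n$ minimising the number of backward arcs and uses the resulting median--order inequalities $d^{-}(x_i)\ge\lfloor i/2\rfloor$ and $d^{+}(x_i)\ge\lfloor (n+1-i)/2\rfloor$ to conclude that the middle $(1-\varepsilon)n$ vertices in this ordering all have both degrees at least $\varepsilon n/4$. You instead bound $|B^{+}|$ and $|B^{-}|$ directly, by looking at the maximum out-- (respectively in--) degree inside the sub-tournament each bad set spans, and then observe that $B^{+}$ and $B^{-}$ are disjoint. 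Both proofs are short and elementary; yours avoids invoking the median--order property altogether, while the paper's version yields a little extra structure --- the good vertices form a contiguous block in a specific ordering --- which is not needed for this lemma or its application but could be handy elsewhere. Strictly, your bound gives $|B^{+}\cup B^{-}|<\varepsilon n+2$, hence at least $n(1-\varepsilon)-2$ good vertices; this matches the precision of the paper's argument and is immaterial for how the lemma is used.
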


\begin{proof}
Pick an ordering $x_1,\dots,x_n$, with smallest number of backward edges, i.e. those of the form $x_ix_j$ with $i>j$. 
We have $d^{-}(x_i) \geq \lfloor i/2 \rfloor$ and $d^{+}(x_i) \geq \lfloor (n+1-i)/2 \rfloor$ for all $i \in [n]$. 
Now the middle $(1-\varepsilon)n$ vertices have out-degree and in-degree at least $\varepsilon n/4$ as required. 
\end{proof}

\begin{lemma}\label{lem:differ}
Let $T$ be a tournament on $n$ vertices and let $0<\varepsilon<1$. 
Then, for every $t \leq n$, $T$ contains $(1-\varepsilon)t$ vertices with minimum out-degree and minimum in-degree at least $\varepsilon n/4$ and whose out-degrees differ by at most $t$.
\end{lemma}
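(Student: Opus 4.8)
\textbf{Proof plan for Lemma~\ref{lem:differ}.}

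The plan is to apply the previous lemma to locate a large "median" set of vertices with good degree bounds, and then use a pigeonhole/sliding-window argument on out-degrees to extract a sub-collection whose out-degrees are tightly clustered. First I would invoke the preceding lemma (with the same $\eps$) to obtain a set $W \subseteq V(T)$ of size at least $(1-\eps)n$ in which every vertex has both out-degree and in-degree at least $\eps n/4$ in $T$. Any subset of $W$ automatically inherits these degree bounds, so it remains only to find, inside $W$, a set of $(1-\eps)t$ vertices whose out-degrees (in $T$) lie in an interval of length at most $t$.

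For the clustering step, order the vertices of $W$ as $w_1, \dots, w_m$ (where $m = |W| \geq (1-\eps)n$) by non-decreasing out-degree in $T$, so $d^+(w_1) \le d^+(w_2) \le \cdots \le d^+(w_m)$. Consider the consecutive windows $w_1, \dots, w_t$; $\,w_{t+1}, \dots, w_{2t}$; and so on, partitioning (all but a negligible tail of) $W$ into roughly $m/t$ blocks of $t$ consecutive vertices each. Within any such block the extreme out-degrees differ by at most the total spread of out-degrees divided by the number of blocks — more carefully, since out-degrees range over an interval of length at most $n$ and there are at least $(1-\eps)n/t - 1 \geq n/t - O(1)$ disjoint windows of size $t$, at least one window has out-degree spread at most $n / (\text{number of windows}) \le t \cdot \frac{n}{(1-\eps)n - t}$; for $n$ large relative to $t$ and $\eps$ bounded away from $1$ this is at most, say, $2t$, which already gives the statement up to the constant. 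To get spread exactly at most $t$ one can instead slide a window of width $t$ (in out-degree value, not in index) across the range $[0,n]$: the $t+1$ half-open intervals $[jt/(t+1), (j+1) \cdot \lceil n/(t+1)\rceil)$ — or more simply $\lceil n/t \rceil$ disjoint intervals of length $t$ covering $[0,n]$ — partition all $m$ vertices, and by pigeonhole one such interval contains at least $m / \lceil n/t \rceil \ge (1-\eps)n / (n/t + 1) \ge (1-\eps)t \cdot \frac{n}{n+t}$ of them; absorbing the lower-order term into a slightly worse $\eps$ (or noting the statement has the flexibility of the unspecified constant in front) yields $(1-\eps)t$ vertices whose out-degrees all lie in an interval of length $t$, hence differ pairwise by at most $t$.

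The only mild obstacle is bookkeeping the two small losses — the $(1-\eps)n$ from the degree lemma and the rounding loss $n/(n+t)$ (equivalently the $+1$ from $\lceil n/t\rceil$) in the pigeonhole — simultaneously, and checking they can be combined into the single factor $(1-\eps)$ claimed; this is only an issue when $t$ is comparable to $n$, and in that regime one takes the interval to be all of $[0,n]$ (width $\le n$, but then $t \geq n$ so width $\le t$ anyway... ) so in fact one should split into the case $t \le n/2$, where the rounding term $n/(n+t) \ge 2/3$ is harmless and can be absorbed by shrinking the constant, and the case $t > n/2$, handled directly. None of the steps requires anything beyond the previous lemma and elementary counting, so I expect the proof to be short.
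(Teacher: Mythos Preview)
Your approach is essentially the same as the paper's: apply the preceding degree lemma to get $(1-\eps)n$ vertices with both degrees at least $\eps n/4$, then pigeonhole these vertices into intervals of length $t$ according to their out-degree in $[0,n]$. The paper does exactly this in three lines, simply asserting there are ``at most $n/t$'' intervals and hence some interval catches at least $(1-\eps)n \cdot t/n = (1-\eps)t$ vertices; you are over-engineering the rounding (the $\lceil n/t\rceil$ versus $n/t$ discrepancy and the case split on $t$ versus $n/2$), which the paper — in keeping with the already non-integer quantity $(1-\eps)t$ in the statement — does not track.
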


\begin{proof}
Use the above lemma to obtain $n(1-\varepsilon)$ vertices with out-degree and in-degree at least $\varepsilon n/4$. 
Now, split the set $[n]$ into intervals of size $t$. 
Put each of the above vertices into the interval which contains its out-degree.
There are at most $n/t$ intervals and so by the pigeonhole principle one interval contains at least $(1-\varepsilon)t$ vertices. 
\end{proof}

\begin{proof}[Proof of Theorem~\ref{thm:immersion2}]
We let $C:=59$. We will repeatedly do the following process, until we are able to find an immersion as desired. 
Initialise by setting $i:=1$ and $T_1:=T$. 

We then repeat the following.

{\bf Step $i$:}

First set $R_i$ to be a set of $k$ vertices in $T_i$ which each have out-degree and in-degree at least $4k$ within $T_i$, and whose out-degrees differ by at most $2k$ within $T_i$.
We know this exists by applying Lemma~\ref{lem:differ} with $n\geq 32k$, $\varepsilon=1/2$ and $t=2k$. 
Since we use $C=59$, it is clear that $|V(T_1)| \geq 32k$; for $T_i$ with $i \geq 2$, we explain later.
We try to find an immersion inside $T_i$, using $R_i$ as the branch vertices.

If we succeed, then we are done, so suppose not.
There exists two vertices $x_i,y_i \in R_i$ for which we cannot find a directed path of length $3$ from $x_i$ to $y_i$, which extends the current attempt to find an immersion.
In particular, we may have managed to construct paths of length $3$ from $x_i$ to each other vertex in $R_i$, and from each other vertex in $R_i$ to $y_i$.
Suppose that these paths go through sets $X_i$ and $Y_i$ respectively, and are of minimal length. 
That is, define $X_i$ to be the set of vertices which are contained in the constructed paths between $x_i$ and vertices in $R_i$ of the immersion being built. Define $Y_i$ similarly (excluding those vertices already in $X_i$) for paths between vertices in $R_i$ and $y_i$.
By minimality, we mean that for each constructed path, it is not possible to shortcut the path by removing any internal vertices within the path.
Let $A_i$ be the out-neighbours of $x_i$ outside of $R_i \cup X_i \cup Y_i$, and let $B_i$ be the set of in-neighbours
of $y_i$ outside of $R_i \cup X_i \cup Y_i$. 
Note that $R_i,A_i,B_i,X_i,Y_i \subseteq V(T_i)$; let $C_i$ be all of the remaining vertices in $T_i$.
See Figure~\ref{fig:lookforimmersion} for a sketch.

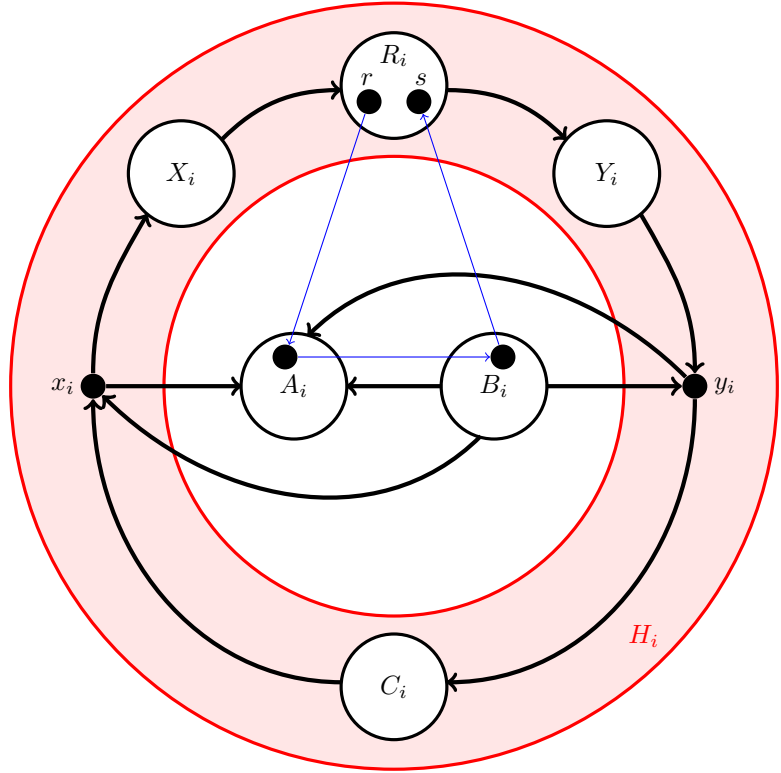
\begin{figure}[h]
\begin{center}
\begin{tikzpicture}[scale=1.0]

\filldraw[color=red!100, fill=red!10, very thick] (0:0) circle(145pt);
\filldraw[color=red!100, fill=red!0, very thick] (0:0) circle(87pt);

\node[vertex] at (180:4) (x) {};
\node[vertex] at (0:4) (y) {};

\filldraw[color=black!100, fill=red!0, very thick] (90:4) circle(20pt);
\filldraw[color=black!100, fill=red!0, very thick] (180:1.33) circle (20pt);
\filldraw[color=black!100, fill=red!0, very thick] (0:1.33) circle (20pt);
\filldraw[color=black!100, fill=red!0, very thick] (45:4) circle (20pt);
\filldraw[color=black!100, fill=red!0, very thick] (135:4) circle (20pt);
\filldraw[color=black!100, fill=red!0, very thick] (270:4) circle (20pt);

\node[vertex] at (85:3.8) (s) {};
\node[vertex] at (95:3.8) (r) {};
\node[vertex] at (15:1.5) (b) {};
\node[vertex] at (165:1.5) (a) {};

\draw[->,ultra thick] (x) -- (180:2.02);
\draw[<-,ultra thick] (y) -- (0:2.02);
\draw[<-,ultra thick] (180:0.64) -- (0:0.64);

\draw[<-,ultra thick] (x) to[out=-45,in=-135] (330:1.33);
\draw[->,ultra thick] (y) to[out=135,in=45] (150:1.33);

\draw[->,ultra thick] (x) to[out=90,in=-120] (145:4);
\draw[->,ultra thick] (125:4) to[out=45,in=180] (100:4);
\draw[->,ultra thick] (80:4) to[out=0,in=135] (55:4);
\draw[->,ultra thick] (35:4) to[out=-60,in=90] (y);
\draw[<-,ultra thick] (x) to[out=270,in=180] (260:4);
\draw[->,ultra thick] (y) to[out=270,in=0] (280:4);

\draw[->,blue] (r) -- (a);
\draw[->,blue] (a) -- (b);
\draw[->,blue] (b) -- (s);

\node at (95:4.1) {$r$};
\node at (85:4.1) {$s$};
\node at (180:4.4) {$x_i$};
\node at (0:4.4) {$y_i$};
\node at (180:1.33) {$A_i$};
\node at (0:1.33) {$B_i$};
\node at (45:4) {$Y_i$};
\node at (90:4.4) {$R_i$};
\node at (135:4) {$X_i$};
\node at (270:4) {$C_i$};
\node at (315:4.7) {{\color{red}$H_i$}};

\end{tikzpicture}
\end{center}
\caption{A sketch of $x_i, y_i$, the sets $R_i,A_i,B_i,X_i,Y_i$ and edges between them. The set $H_i$ contains everything except $A_i$ and $B_i$. If we assume there is no way of choosing a directed path of length $3$ going from $x_i$ to $y_i$, then every edge between $A_i$ and $B_i$ goes from $B_i$ to $A_i$, except for possibly some edges which are already used in paths from another branch vertex to another branch vertex, for example the blue path between $r$ and $s$ shown here. Such an edge cannot be used in a path between $x_i$ and $y_i$, since paths between branch vertices must be edge-disjoint from each other.} 
\label{fig:lookforimmersion}
\end{figure}

Now let $H_i:=V(T_i) \setminus (A_i \cup B_i)$ and $D_i:=A_i \cup H_i$. 

Finally we claim that either:
\begin{itemize}
\item[(P1)] All but at most $k-1$ vertices in $D_i$ have out-degree at least $(C-1)k$ in $D_i$.
If so, define $T_{i+1}:=T_i[D_i]$ and move onto step $i+1$; or
\item[(P2)] We can find an immersion of a complete directed graph on $k$ vertices where each path has at most $3$ edges.
\end{itemize}

If (P1) holds, then there exists a set of size $|D_i|-k$ inside $D_i$ which induces a subtournament with out-degree at least $(C-2)k$. 
Thus $|V(T_{i+1})| \geq (2C-4)k \geq 32k$ by our choice of $C$, and so we can apply Lemma~\ref{lem:differ} in step $i+1$.

Now we show why the above process must terminate.
Suppose for a contradiction that it does not.
Since we terminate if we find an immersion using branch vertices $R_i$ or if (P2) is true, we must have (P1) for all $i$, infinitely. 
In particular, combined with the previous argument this implies we have $|D_i| \geq 32k$ for all $i$.
For all $i \geq 2$ we have $D_{i-1} = D_i \cup B_i$.
If $B_i$ is non-empty for all $i$, then we have that $D_1 \supsetneq D_2 \supsetneq D_3 \supsetneq \dots$, and so this would immediately imply that there exists $j$ such that $|D_j| < 32k$, a contradiction. The fact that $B_i$ is non-empty follows quickly via $y_i$ having in-degree at least $4k$ in $T_i$;
the only other in-neighbours of $y_i$ can be at most $k-2$ from $R_i \cup Y_i$ (there are at most $k-2$ by minimality of the path lengths) and at most $2k-4$ from $X_i$. 
(Vertices in $A_i$ cannot be in-neighbours of $y_i$ as then we could find a path of length $2$ from $x_i$ to $y_i$)

The above argument shows that $|B_i| \geq k$, and a symmetrical argument shows that $|A_i| \geq k$ as well, a fact which we will use later.

Now it suffices to prove that if (P1) is not true, then (P2) holds.
Suppose that step $i$ is the first step where (P1) does not hold.
Then there exist $k$ vertices in $D_i$ which have out-degree at most $(C-1)k$ in $D_i$. 
Let these vertices be $V_i:=\{v_1,\dots,v_k\}$.
We will use these vertices as branch vertices of the immersion. 
First we make the following claim, which will allow us to easily choose the paths.
\begin{claim}
For each $j \in [i]$ and $t \in [k]$, we have
\begin{itemize}
\item[(i)] $v_t$ has out-degree at least $k-1$ in $\cup_{\ell \in [i]} B_{\ell}$;
\item[(ii)] $v_t$ has in-degree at least $3k-1$ in $A_j \setminus V_i$.
\end{itemize}
\end{claim}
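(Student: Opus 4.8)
\emph{Overall approach.} Both parts are degree counts that exploit two facts: $v_t$ has \emph{small} out-degree inside $D_i$ (this is exactly the property defining $V_i$), while it has \emph{large} out-degree in the whole of $T$ (since $\delta^{+}(T)\ge Ck$); and, crucially, the only vertices ever discarded when passing from $T_\ell$ to $T_{\ell+1}$ are those of $B_\ell$. The bookkeeping that makes this work is to unwind the recursion $D_\ell=V(T_\ell)\setminus B_\ell$, $V(T_{\ell+1})=D_\ell$, which gives that $V(T)$ is the disjoint union $D_i\,\cup\, B_1\,\cup\,\cdots\,\cup\, B_i$ and, more generally, $V(T_j)=D_i\,\cup\, B_j\,\cup\,\cdots\,\cup\, B_i$ for every $j\le i$.

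\emph{Part (i).} I would argue directly. Since $v_t\in V_i\subseteq D_i$, by the choice of $V_i$ we have $d^{+}_{D_i}(v_t)\le (C-1)k$, while $d^{+}_{T}(v_t)\ge Ck$ because $\delta^{+}(T)\ge Ck$. Every out-neighbour of $v_t$ in $T$ lies in $D_i$ or in exactly one $B_\ell$ with $\ell\in[i]$, so the number of out-neighbours of $v_t$ inside $\bigcup_{\ell\in[i]}B_\ell$ equals $d^{+}_{T}(v_t)-d^{+}_{D_i}(v_t)\ge Ck-(C-1)k=k\ge k-1$.

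\emph{Part (ii).} Fix $j\le i$ and $t\in[k]$. The plan has three steps. First, bound $|A_j|$ from below: $x_j\in R_j$, so by the choice of $R_j$ via Lemma~\ref{lem:differ} (and, for $j=1$, by $\delta^{+}(T)\ge Ck$ directly) the out-degree of $x_j$ in $T_j$ is large — at least $4k$, and in fact at least $|V(T_j)|/8$, which is sizeable since $|V(T_j)|\ge (2C-4)k$ for $j\ge 2$ as (P1) held at step $j-1$; meanwhile $R_j\cup X_j\cup Y_j$ has at most $k+2(k-1)+2(k-1)<5k$ vertices, since each of the at most $k-1$ relevant paths of each kind contributes at most two internal vertices. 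Hence $|A_j|\ge d^{+}_{T_j}(x_j)-5k$. Second, transfer to $D_i$: as $A_j\cap B_j=\emptyset$ we have $A_j\subseteq D_j$, and since $D_i=D_j\setminus(B_{j+1}\cup\cdots\cup B_i)$ the set $A_j':=(A_j\cap D_i)\setminus V_i$ lies in $D_i$, avoids $v_t$, and has $|A_j'|\ge |A_j|-|B_{j+1}\cup\cdots\cup B_i|-k$. Third, count: since $A_j'\subseteq D_i$ and $d^{+}_{D_i}(v_t)\le(C-1)k$, vertex $v_t$ has at most $(C-1)k$ out-neighbours in $A_j'$, so its in-degree in $A_j\setminus V_i$ is at least $|A_j'|-(C-1)k\ge |A_j|-|B_{j+1}\cup\cdots\cup B_i|-(C-1)k-k$, and it remains to check, with $C=59$, that this is at least $3k-1$.

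\emph{Main obstacle.} Part (i) is routine. In part (ii) the only real work is the final inequality: one must show $|A_j|-|B_{j+1}\cup\cdots\cup B_i|\ge (C+3)k$ (roughly), i.e. that enough of $A_j$ survives in $D_i$. This is where the quantitative side of the construction enters — the lower bounds on $|V(T_\ell)|$ preserved by (P1), together with control on the sizes of the sets $B_\ell$ removed at later steps (equivalently, that these later removals do not eat into $A_j$ too badly) — and where the choice $C=59$ is needed. The step $j=1$ must be treated on its own, since there the size bound $|V(T_\ell)|\ge(2C-4)k$ available for $\ell\ge 2$ is replaced by the input hypothesis $\delta^{+}(T)\ge Ck$.
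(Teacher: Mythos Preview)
Your part (i) is correct and matches the paper exactly.

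Your part (ii), however, has a genuine gap. You set up the inequality correctly — you need $|A_j|-|B_{j+1}\cup\cdots\cup B_i|\ge (C+3)k$ — but the ingredients you propose cannot establish it. Your lower bound on $|A_j|$ via the out-degree of $x_j$ in $T_j$ gives only about $|V(T_j)|/8-5k$, while $|B_{j+1}\cup\cdots\cup B_i|=|D_j|-|D_i|$ can be as large as $|V(T_j)|$ minus a constant times $k$; so the difference you need to control can be hugely negative with only those bounds. The later $B_\ell$ are not small in general, and nothing in (P1) bounds them individually.

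What you are missing is an algebraic simplification and two substantive estimates. Since $D_j=A_j\cup H_j$ and $|B_{j+1}\cup\cdots\cup B_i|=|D_j|-|D_i|$, your target quantity collapses to $|A_j|-(|D_j|-|D_i|)=|D_i|-|H_j|$; thus a lower bound on $|A_j|$ is irrelevant, and what is needed is $(a)$ an upper bound on $|H_j|$ and $(b)$ a lower bound on $|D_i|$. The paper obtains $(a)$ by comparing the out-degrees of $x_j$ and $y_j$: they agree on $A_j\cup B_j$, differ by at most $3k$ on $R_j\cup X_j\cup Y_j$, every vertex of $C_j$ is an out-neighbour of $y_j$ but not of $x_j$, and — crucially — the $R_j$ property from Lemma~\ref{lem:differ} says their out-degrees differ by at most $2k$, forcing $|C_j|\le 5k$ and hence $|H_j|\le 10k$. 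You never use this ``out-degrees differ by at most $2k$'' clause, and without it $|H_j|$ is uncontrolled. For $(b)$ the paper uses a handshake-type count: at least $(C-13)k$ vertices of $A_i$ have out-degree at least $(C-2)k$ in $D_i$, so $|D_i|^2\ge 2(C-13)(C-2)k^2$, which with $C=59$ gives $|D_i|\ge (C+13)k$. These two bounds together yield $|D_i|-|H_j|\ge (C+3)k$, closing the argument.
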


\begin{proof}
For (i), since $v_t$ has out-degree at least $Ck$ in $T$ and out-degree at most $(C-1)k$ in $D_i$, it has out-degree at least $k$ in $V(T) \setminus D_i = \cup_{\ell \in [i]} B_{\ell}$.

For (ii), first we bound the size of $|C_j|$ and hence $|H_j|$ for each $j \in [i]$.
Since we cannot find a directed path of length $3$ from $x_j$ to $y_j$ outside of $R_j \cup X_j \cup Y_j$,
every edge between $A_j$ and $B_j$ goes from $B_j$ to $A_j$,
except for possibly some edges which are already used in a directed path of length $3$ from one branch vertex to another branch vertex.
Additionally, all vertices in $A_j$ are out-neighbours of $y_j$ and 
all vertices in $B_j$ are in-neighbours of $x_j$.
The vertices in $C_j$ are not out-neighbours of $x_j$ (otherwise they would be in $A_j$), and are not in-neighbours of $y_j$ (otherwise they would be in $B_j$).
Again, refer to Figure~\ref{fig:lookforimmersion} for an illustration.
Now, $x_j$ and $y_j$ have the exact same out-degree within $A_j \cup B_j$. 
We have $x_j$ has out-degree at most $k-2$ in $R_j \cup X_j$, and has out-degree at most $2k$ in $Y_j$.  
Overall, $x_j$ has out-degree at most $3k$ larger than the out-degree of $y_j$ so far.  
That is, 
$$|(N^{+}_{T_j}(x_j))\cap(A_j \cup B_j \cup R_j \cup X_j \cup Y_j)| - |(N^{+}_{T_j}(y_j))\cap(A_j \cup B_j \cup R_j \cup X_j \cup Y_j)| \leq 3k.$$
The vertices in $C_j$ contribute only to the out-degree of $y_j$ and not of $x_j$,
(i.e. $|(N^{+}_{T_j}(x_j))\cap C_j|=0$ and $|(N^{+}_{T_j}(y_j))\cap C_j|=|C_j|$)
and so since the out-degree of $y_j$ is at most $2k$ larger than the out-degree of $x_j$, we obtain 
\begin{align*}
|C_j| = & \, |(N^{+}_{T_j}(y_j))\cap C_j| 
= d^{+}_{T_j}(y_j) - |(N^{+}_{T_j}(y_j))\cap(A_j \cup B_j \cup R_j \cup X_j \cup Y_j)| \\
\leq & \, d^{+}_{T_j}(x_j)+2k - |(N^{+}_{T_j}(y_j))\cap(A_j \cup B_j \cup R_j \cup X_j \cup Y_j)| \\ 
\leq & \, d^{+}_{T_j}(x_j)+2k - |(N^{+}_{T_j}(x_j))\cap(A_j \cup B_j \cup R_j \cup X_j \cup Y_j)| + 3k = 5k.
\end{align*}
Recalling that $|X_j|, |Y_j| \leq 2k-4$ and $|R_j| \leq k-2$,
this further implies that $|H_j| \leq 10k$.

Recall that $v_t$ has out-degree at most $(C-1)k$ in $D_i$.
This means it has in-degree at least $|D_i|-(C-1)k-1$ in $D_i$.
We have $D_i \subseteq D_j = A_j \cup H_j$.
It follows that $v_t$ has in-degree at least $|D_i|-(C-1)k-1$ in $A_j \cup H_j$.
Since $|H_j| \leq 10k$, $v_t$ has in-degree at least 
$|D_i|-(C+9)k-1$ in $A_j$,
which in turn implies it has in-degree at least $|D_i|-(C+10)k-1$ in $A_j \setminus V_i$.
Therefore we are done if we can show that $|D_i| \geq (C+13)k$.

Now recall that there are no edges going from $A_j$ to $B_j$ for any $j \in [i]$ (except for those already used in paths between other branch vertices).
Therefore if $i=1$, then all vertices in $A_i$ have out-degree at least $(C-1)k$ in $D_i$.
If $i>1$, then since (P1) holds for $i-1$, all but at most $k-1$ vertices in $D_{i-1}$ have out-degree at least $(C-1)k$ in $D_{i-1}$.
Further, since $V(T_i)=D_{i-1}$ and $|A_i| \geq k$, there is at least one vertex in $A_i$ with out-degree at least $(C-2)k$ in $D_i$.
First of all, this implies $|D_i| \geq (C-2)k$.
We will obtain an even better bound.

Again using that $|H_i| \leq 10k$, we have $|A_i| \geq (C-12)k$,
and further, all but at most $k-1$ of these vertices have out-degree at least $(C-2)k$ in $D_i$.
It follows that 
\begin{align}
|D_i|^2 \geq |D_i| (|D_i|-1) = 2 \sum_{x \in D_i} d^+_{D_i}(x) \geq 2(C-13)(C-2) k^2,
\end{align}
from which it follows that $|D_i| \geq (\sqrt{2(C-13)(C-2)})k$. 
Therefore we require $\sqrt{2(C-13)(C-2)} \geq C+13$, which is true for $C \geq 59$, completing the proof of the claim.
\end{proof}

We now want to find directed paths from $v_r$ to $v_s$ for each ordered pair $(r,s)$ with $r,s \in [k]$, $r \not=s$, which satisfy the properties of an immersion of a complete directed graph on $k$ vertices with paths of length at most $3$.
That is, for each ordered pair $(r,s)$ we must find a directed path of length at most $3$ from $v_r$ to $v_s$, such that the chosen directed paths for each pair are pairwise edge-disjoint.

Given a pair $(r,s)$, we construct the path from $v_r$ to $v_s$ as follows.
Since $v_r$ has out-degree at least $k-1$ in $\cup_{\ell \in [i]} B_i$, we may choose one such out-neighbour $b_{rs}$ which is not yet an out-neighbour of $v_r$ used for the path for $(r,s')$ for some $s' \not=s$. 
Suppose this vertex is in $B_t$, $t \in [i]$.
Now since $v_s$ has in-degree at least $3k-1$ in $A_t \setminus V_i$, we may choose one such in-neighbour $a_{rs}$ which is 
\begin{itemize}
\item not yet an in-neighbour of $v_s$ used for the path for $(r',s)$ for some $r' \not=r$; 
\item not already used together with $b_{rs}$ in a directed path of length $3$ between other branch vertices;
\item is an out-neighbour of $b_{rs}$ (recall that vertices in $B_t$ have all but at most $k-1$ vertices in $A_t$ as out-neighbours).
\end{itemize}
It is now easy to see that the constructed paths $\{v_r b_{rs} a_{rs} v_s : r,s \in [k], r\not=s\}$ are edge-disjoint from each other as required. 
Further note that by construction,  branch vertices are not used as the midpoints of any of these paths, so this is indeed a strong immersion.
\end{proof}

\section{Concluding remarks}\label{sec:conclusion}
We conclude our paper with some open questions. The first one asks for the precise order of a largest tournament without containing an immersion of a transitive tournament on $k$ vertices. Even though our arguments could be optimised to give a better multiplicative constant it is very unlikely they could be pushed to obtain the right constant. 
\begin{conjecture}
    Every tournament on at least $(2+o(1))k$ vertices contains a $1$-immersion of a transitive tournament on $k$ vertices. 
\end{conjecture}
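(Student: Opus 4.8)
The plan is to keep the architecture of the proof of Theorem~\ref{thm:immersion1} --- fix a set $S$ of $k$ branch vertices, use a single edge for every pair already oriented in the transitive direction, and route pairwise edge-disjoint short directed paths for the remaining ``backward'' pairs --- but to replace every lossy step by an essentially tight one. Write $n=(2+\e)k$ with $\e>0$ fixed and $k$ large. A first observation is that the degree obstruction behind the lower bound is mild here: an immersion of a transitive tournament on $k$ vertices forces only \emph{one} vertex of out-degree at least $k-1$ (whence a regular tournament on $2k-3$ vertices is a lower-bound example), and in any $T$ on $n$ vertices a standard count shows at least $n-2t+1$ vertices have out-degree at least $t$, so the full ``layered'' requirement --- for each $j$, at least $j$ vertices of out-degree at least $k-j$ --- is comfortably met when $n=(2+\e)k$. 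Thus the conjecture is really a statement about edge-disjoint routing, and the remark after Theorem~\ref{thm:immersion2} together with a crude edge count (about $2k^2$ edges available, about $k^2/2$ paths needed) suggest that paths of length at most $3$ should suffice.

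The steps, in order, would be: (1) choose $S$, ideally via a median order of $T$ so as to simultaneously control the number of backward pairs inside $S$ and the degrees, into $S$, of the vertices outside $S$; (2) prove a routing lemma in the spirit of Lemma~\ref{lem:pairs} whose ``items'' are the backward pairs of $S$ and whose ``resources'' are middle vertices of length-$2$ paths together with ordered pairs of vertices spanning a length-$3$ path (the triangle blow-up from the introduction shows length $3$ is genuinely needed); (3) carry out a tight demand-versus-supply estimate --- for instance, in a near-regular $T$ each outside vertex sees roughly $k/2$ branch vertices on each side and so can serve about $k/2$ length-$2$ paths, so the $k$ outside vertices cover the $\approx k^2/4$ backward pairs with a factor-$2$ margin; (4) split into cases, handling the structured extremes (near-regular tournaments, transitive blow-ups of small tournaments) by explicit constructions and the remaining ``robustly non-extremal'' tournaments by an expansion argument.

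The hard part --- and the reason the conjecture is still open --- is making step~(3) work \emph{without} the constant-factor loss. In Theorem~\ref{thm:immersion1} the large value of $C$ comes precisely from the probabilistic sparsification via Theorem~\ref{thm:prob}, which keeps only a small constant fraction of the vertices; with only $(1+\e)k$ vertices outside $S$ there is no such fraction to spare, so each outside vertex must be reused in $\Theta(k)$ edge-disjoint paths and the routing has to be essentially optimal. A successful proof will therefore need a deterministic, lossless mechanism --- a careful greedy or rerouting scheme, or an absorption argument --- together with a clean dichotomy isolating the genuinely extremal configurations; pinning down the correct ``robust'' hypothesis and verifying that the tight routing goes through under it is the obstacle I would expect to dominate the work.
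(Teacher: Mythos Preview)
The statement you were asked to prove is not a theorem but a \emph{conjecture}, placed in the paper's concluding remarks with no proof; the authors explicitly present it as open and remark that their methods, even optimised, are ``very unlikely'' to yield the right constant. There is therefore no proof in the paper to compare your proposal against.

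Your write-up is not a proof either, and you say so yourself: it is a research plan that correctly identifies the main obstacle --- the probabilistic sparsification step in the proof of Theorem~\ref{thm:immersion1} is where the constant is lost, and with only $(1+\e)k$ non-branch vertices one cannot afford that loss. Your diagnosis matches the paper's own assessment. So the honest summary is: the paper has no proof, you have no proof, and both you and the authors agree on why.
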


As stated in the introduction, Lochet proved that for every positive integer $k$, there exists $f(k)$ such that
any digraph with minimum out-degree at least $f(k)$ contains a immersion of a transitive tournament on $k$
vertices. Recently, the first author and Letzter~\cite{LetzterGirao} showed that every eulerian digraph with minimum out-degree at least $Ck$ must contain an immersion of a complete digraph on $k$ vertices. We believe some linear bound should also be enough to find an immersion in arbitrary digraphs. 

We restate here a conjecture which appeared originally in~\cite{LetzterGirao}.

\begin{conjecture}
    There exists an absolute constant $C>0$ such that for any positive integer $k$ the following holds. Let $D$ be a digraph with $\delta^{+}(D)\geq Ck$. Then $D$ contains an immersion of a transitive tournament on $k$ vertices.
\end{conjecture}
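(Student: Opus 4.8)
The plan is to establish the conjecture by induction on $k$, building the immersion of the transitive tournament one branch vertex at a time while carrying along enough surplus out-degree to pay for the connecting paths. Fix the absolute constant $C$, to be tuned at the end, and let $D$ be a digraph with $\delta^{+}(D)\geq Ck$. First I would replace $D$ by an edge-minimal subdigraph $D'$ with $\delta^{+}(D')\geq Ck$; by minimality, every vertex of $D'$ that is the tail of an edge has out-degree \emph{exactly} $Ck$, so $D'$ is $Ck$-out-regular, and in particular has no sink, whence every vertex lies on a directed cycle. This regularity is what lets the induction charge only a bounded amount of out-degree per step.

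For the inductive step, recall that the transitive tournament on $k$ vertices is a source $v$ together with $k-1$ out-edges into a transitive tournament on $k-1$ vertices. So it suffices to (i) choose a vertex $v$, (ii) find, in a suitable subdigraph $D''\subseteq D'-v$ with $\delta^{+}(D'')\geq C(k-1)$, an immersion $I'$ of the transitive tournament on $k-1$ vertices, say with branch vertices $u_1,\dots,u_{k-1}$, and (iii) route $k-1$ directed paths from $v$ to $u_1,\dots,u_{k-1}$ that are edge-disjoint from one another and from $I'$. The natural way to guarantee (iii) is to carve $D''$ out of the portion of $D'$ that $v$ reaches along many edge-disjoint paths: using a Menger/flow argument together with $Ck$-out-regularity, one wants a large set $W$ ``downstream'' of $v$ such that $v$ can ship $k-1$ units of flow into any $(k-1)$-subset of $W$, and then to take $D''\subseteq W$ so that its vertices retain out-degree $\geq C(k-1)$ even after we delete the $O(1)$ connector edges reserved per vertex; one builds $I'$ in $D''$ and routes the connectors in the residual network. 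The bookkeeping then amounts to checking that all losses --- $O(1)$ edges per vertex for connectors, the edges of $I'$ itself, and the handful of ``waste'' vertices that every such carving step discards --- cost only $O(k)$ in out-degree, which a large $C$ absorbs.

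The step I expect to be the true obstacle is simultaneously guaranteeing, after deleting the connector edges and the edges of $I'$, both that $D''$ still has min out-degree $\geq C(k-1)$ and that $v$ can route the $k-1$ edge-disjoint paths into it. In an arbitrary digraph this is precisely the sort of assertion that fails for directed splitting-off, which is why Mader's subdivision conjecture remains open and why only the Eulerian case of the present conjecture is known \cite{LetzterGirao}. I would attempt to sidestep it with a dichotomy: either $D'$ contains a large Eulerian subdigraph whose min out-degree is still linear in $k$, in which case the result of \cite{LetzterGirao} applies directly; or the total imbalance $\sum_{x}|d^{+}_{D'}(x)-d^{-}_{D'}(x)|$ is large, which together with $Ck$-out-regularity ought to force a long ``transitive ladder'', namely vertices $w_1,\dots,w_m$ with $m$ linear in $k$ such that each $w_i$ sends many edge-disjoint paths past $w_{i+1},\dots,w_m$, and from such a ladder an immersion of the transitive tournament on $k$ vertices can be read off greedily. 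Reconciling these two horns with a single absolute constant $C$ --- and doing so with an induction efficient enough to beat Lochet's non-linear bound \cite{Lochet} --- is the crux, and I do not see how to carry it out without a genuinely new idea.
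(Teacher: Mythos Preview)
The statement you are attempting to prove is not a theorem in the paper but an open \emph{conjecture}, restated from \cite{LetzterGirao} in the concluding remarks. The paper offers no proof, and indeed none is known. Your proposal is therefore not being compared against an existing argument; rather, you are sketching an attack on an open problem and, to your credit, you correctly diagnose where it breaks down.

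The genuine gap is exactly the one you name: the step where you must simultaneously (a) carve out a subdigraph $D''$ of minimum out-degree $\geq C(k-1)$ and (b) retain $k-1$ edge-disjoint directed paths from the chosen source $v$ into the branch vertices of the inductively built immersion. In the undirected (or Eulerian directed) setting, Mader-type splitting-off lemmas let one peel off a vertex while preserving both connectivity and degree conditions; in arbitrary digraphs no such tool exists, and this is precisely the obstruction that keeps Mader's subdivision conjecture open and confines the linear immersion result of \cite{LetzterGirao} to the Eulerian case. Your proposed dichotomy---either a large Eulerian subdigraph or large total imbalance forcing a ``transitive ladder''---is a reasonable heuristic, but neither horn is currently known to yield the conclusion: a digraph can have large minimum out-degree yet no Eulerian subdigraph of linear minimum out-degree, and large imbalance does not by itself produce the nested path structure you describe. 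As you yourself conclude, a genuinely new idea is needed; the paper agrees, which is why the statement is posed as a conjecture.
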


\section*{Acknowledgements}
The authors would like to thank the two anonymous referees for their careful and helpful reviews.

\bibliographystyle{amsplain}
\bibliography{immersions_tournaments.bib}
\end{document}